\documentclass{amsart}


\usepackage{amsmath}
\usepackage{amssymb}
\usepackage{color}
\usepackage{amsthm}
\usepackage{tikz}
\usepackage{tikz-cd}
\usetikzlibrary{positioning,calc,cd}
\usepackage{hyperref}
\usepackage{tensor}
\usepackage{relsize}
\usepackage{mathtools}
\usepackage{quiver}

\newtheorem{theorem}{Theorem}[section]
\newtheorem{proposition}[theorem]{Proposition}
\newtheorem{lemma}[theorem]{Lemma}
\newtheorem{corollary}[theorem]{Corollary}
\newtheorem{definition}[theorem]{Definition}
\newtheorem{example}[theorem]{Example}


\definecolor{Noir}{rgb}{0,0,0} 
\definecolor{Blanc}{rgb}{1,1,1} 
\definecolor{Gray}{rgb}{0.5,0.5,0.5} 
\definecolor{Rouge}{rgb}{0.8,0.1,0.1} 
\definecolor{DBleu}{RGB}{51,51,178} 
\definecolor{LBleu}{rgb}{0.85,0.85,1} 
\definecolor{Orange}{RGB}{255,140,0} 
 
 
\newcommand{\bcent}{\begin{center}} 
\newcommand{\ecent}{\end{center}} 
 
\newcommand{\benum}{\begin{enumerate}} 
\newcommand{\eenum}{\end{enumerate}} 
\newcommand{\bitem}{\begin{itemize}} 
\newcommand{\eitem}{\end{itemize}} 
 
\newcommand{\btab}{\begin{tabular}} 
\newcommand{\etab}{\end{tabular}} 
\newcommand{\beqn}{\begin{eqnarray}} 
\newcommand{\eeqn}{\end{eqnarray}} 
 
\newcommand{\bmath}{\begin{math}} 
\newcommand{\emath}{\end{math}} 
 
\newcommand{\noin}{\noindent} 



\newcommand{\CC}{\mathbb C}
\newcommand{\DD}{\mathbb D}
\newcommand{\HH}{\mathbb H}

\newcommand{\PP}{\mathbb P}
\newcommand{\RR}{\mathbb R}
\newcommand{\ZZ}{\mathbb Z}
\newcommand{\QQ}{\mathbb Q}

\newcommand{\frakD}{\mathfrak D}

\newcommand{\frakk}{\mathfrak k}

\newcommand{\calG}{\mathcal G}
\newcommand{\calP}{\mathcal P}

\newcommand{\calF}{\mathcal F}

\newcommand{\calT}{\mathcal T}

\newcommand{\uQQ}{\underline{\mathbb{Q}}}

\newcommand{\ssslash}{\mathbin{/\mkern-6mu/\mkern-6mu/}}

\newcommand{\loopquiver}{%
    \begin{tikzpicture}[scale=0.35, baseline=-1.75mm, thick]
    \node (a) at (0,0) {$\bullet$};
\draw (0,-0.3) circle (0.4cm);
    \end{tikzpicture}%
}

\newcommand{\triquiver}{%
 \begin{tikzpicture}[scale = 0.44, baseline = -1mm,]
\node(a) at (0,-0.04) {$\bullet$};
\node(b) at (2,-0.04) {$\bullet$};

\draw[-] (0,0) -- (2,0);
\draw[-] (0,0) to [out = 30, in =150] (2,0);
\draw[-] (0,0) to [out = 330, in = 210] (2,0);
\end{tikzpicture}%
}

\newcommand{\diquiver}{%
 \begin{tikzpicture}[scale = 0.44, baseline = -1mm,]
\node(a) at (0,-0.04) {$\bullet$};
\node(b) at (2,-0.04) {$\bullet$};

\draw[-] (0,0) to [out = 30, in =150] (2,0);
\draw[-] (0,0) to [out = 330, in = 210] (2,0);
\end{tikzpicture}%
}

\newcommand{\fourquiver}{%
 \begin{tikzpicture}[scale = 0.44, baseline = -1mm,]
\node(a) at (0,-0.04) {$\bullet$};
\node(b) at (2,-0.04) {$\bullet$};

\draw[-] (0,0) to [out = 15, in =165] (2,0);
\draw[-] (0,0) to [out = 345, in = 195] (2,0);
\draw[-] (0,0) to [out = 40, in =140] (2,0);
\draw[-] (0,0) to [out = 320, in = 220] (2,0);
\end{tikzpicture}%
}

\newcommand{\diquiverplus}{%
 \begin{tikzpicture}[scale = 0.44, baseline = -1mm,]
\node(a) at (0,-0.04) {$\bullet$};
\node(b) at (2,-0.04) {$\bullet$};
\node(c) at (3,-0.04) {$\bullet$};

\draw[-] (0,0) to [out = 30, in =150] (2,0);
\draw[-] (0,0) to [out = 330, in = 210] (2,0);
\draw[-] (2,0) to (3,0);
\end{tikzpicture}%
}

\newcommand{\newquiver}{%
 \begin{tikzpicture}[scale = 0.44, baseline = -1mm,]
\node(a) at (0,-0.04) {$\bullet$};
\node(b) at (2,-0.04) {$\bullet$};
\node (c) at (1,0.75){$\bullet$};

\draw[-] (0,0) -- (1,0.8) -- (2,0);
\draw[-] (0,0) to [out = 20, in =160] (2,0);
\draw[-] (0,0) to [out = 340, in = 200] (2,0);
\end{tikzpicture}%
}

\renewcommand{\epsilon}{\varepsilon}

\providecommand{\v}[1]{\vec{#1}}

\newenvironment{manualtheorem}[1]{%
  \manualtheoreminner
}{\endmanualtheoreminner}

\newcommand\varleq{\mathbin{\vcenter{\hbox{%
  \oalign{\hfil$\scriptstyle<$\hfil\cr 
          \noalign{\kern-.3ex} 
          $\scriptscriptstyle({-})$\cr}%
}}}} 
 
\renewcommand\subsetneq{\mathbin{\vcenter{\hbox{%
  \oalign{\hfil$\scriptstyle\subset$\hfil\cr 
          \noalign{\kern-.3ex} 
          $\scriptscriptstyle({-})$\cr}%
}}}}

\newenvironment{manualcorollary}[1]{%
  \manualcorollaryinner
}{\endmanualcorollaryinner}

\renewcommand\subsetneq{\mathbin{\vcenter{\hbox{%
  \oalign{\hfil$\scriptstyle\subset$\hfil\cr 
          \noalign{\kern-.3ex} 
          $\scriptscriptstyle({-})$\cr}%
}}}} 

\author{Evan Sundbo}
\address{Department of Pure Mathematics, University of Waterloo\\ Waterloo ON, Canada}
\email{e2sundbo@uwaterloo.ca}

\title[]{Balloon Animal Maps with Applications to the Cohomology of Hypertoric Hitchin Systems}
\date{\today}
\subjclass[2010]{}

\begin{document}

\maketitle

\begin{abstract}
In this article we introduce the notion of a balloon animal map between broken toric varieties and construct several long exact sequences in cohomology related to them. We give a new proof of the deletion-contraction relation on hypertoric Hitchin systems of Dansco--Mcbreen--Shende and present some refinements of it. The end result is a formula for the Poincar\'e polynomial of any hypertoric Hitchin system associated to a graph with first Betti number $2$ along with a recipe to calculate the Poincar\'e polynomial of any hypertoric Hitchin system, given knowledge of a finite number of base cases.
\end{abstract}

\setcounter{tocdepth}{1}
\tableofcontents
\section{Introduction}\label{intro}

A broken toric variety is a union of smooth projective toric varieties glued to each other along torus-invariant toric subvarieties. With the aim of understanding the rational cohomology of broken toric varieties, the author proves in \cite{Sun25} a decomposition theorem for the derived pushforward of the constant sheaf of a broken toric variety on its polytopal complex. As a corollary, one finds a canonical splitting
$$
H^k(X,\underline{\QQ}_X)\cong \bigoplus_{i+j=k}H^j(P_\bullet, R^if_*\uQQ_X).
$$ 

The problem is thus reduced to calculating the cohomology of the higher derived pushforward sheaves $R^if_*\uQQ_X$ on polytopal complexes of broken toric varieties. This is more tractable but not trivial. In \cite{Sun25} some vanishing theorems and other tools are developed which allow one to attack the problem, yielding formulas for the Betti numbers in some classes of examples (including when the polytopal complex is contractible or is the skeleton of a higher-dimensional polytope).

In this article we approach the problem by studying \emph{balloon animal maps} between broken toric varieties and thus relating the cohomology groups of their higher derived pushforwards.  Given a broken toric variety $X$ and a subdivision of its polytopal complex, one constructs two new broken toric varieties $X^{\cup}$ and $X^\cap$ (along with polytopal complexes $P_\bullet^\cup$ and $P_\bullet^\cap$). The balloon animal map $b$ goes from $X$ to $X^\cup$ and induces maps in cohomology that allow us to compare information about the cohomology groups of $X$, $X^\cup$, and $X^\cap$.

As highlights we have
\begin{manualtheorem}{(Lemma \ref{lem_bam} (2) and Theorem \ref{bigballoonpf})}[]
Let $b:X\to X^{\cup}$ be a balloon animal map between broken toric varieties induced by a generic subdivision of the polytopal complex of $X$ and let $X^{\cap}$ be the broken toric subvariety of $X^\cup$ associated to $P^{\cap}_\bullet$. There is then a distinguished triangle in the derived category of sheaves on $X^{\cup}$
$$\underline{\QQ}_{X^{\cup}}\to Rb_*\underline{\QQ}_X\to\underline{\QQ}_{X^\cap}[-1]\xrightarrow[]{+1}$$
and an associated long exact sequence
\begin{equation*}
\dots\to H^{k}(X^\cup,\underline{\QQ}_{X^{\cup}})\to H^k(X,\underline{\QQ}_X)\to H^{k-1}(X^\cap,\underline{\QQ}_{X^\cap})\to\dots.
\end{equation*}
\end{manualtheorem}

along with the more refined statement at the level of polytopal complexes

\begin{manualtheorem}{(Lemma \ref{bampushforward} and Theorem \ref{balloonpf})}
Let $b:X\to X^{\cup}$ be a balloon animal map between broken toric varieties induced by a generic subdivision of the polytopal complex of $X$ and let $X^{\cap}$ be the broken toric subvariety of $X^\cup$ associated to $P^{\cap}_\bullet$.  For $i\geq 0$ there exist short exact sequences
\begin{equation*}
0 \to R^if_*\underline{\QQ}_{X^\cup}\to  R^if_*\underline{\QQ}_{X}\to  R^{i-1}f_*\underline{\QQ}_{X^\cap}\to 0.
\end{equation*} and associated long exact sequences
\begin{equation*}
\dots\to H^{k}(P_\bullet^{X^\cup},R^if\underline{\QQ}_{X^{\cup}})\to H^k(P_\bullet,R^if\underline{\QQ}_X)\to H^{k}(P_\bullet^{X^\cap},R^{i-1}f\underline{\QQ}_{X^\cap})\to\dots.
\end{equation*}
\end{manualtheorem}

In Section \ref{section:cellular} we turn to more concrete calculations. By noting that the purely combinatorial nature of the sheaves $R^if_*\uQQ_X$ allows us to view them as cellular sheaves, we can use elementary algebraic topology methods to calculate the cohomology groups by hand in simple enough examples. The philosophy is then that this information can be bootstrapped to more complicated examples via the long exact sequences coming from balloon animal maps. This approach is carried out in the last section for a particular class of broken toric varieties as we will soon explain.

Broken toric varieties appear in areas of geometry as varied as mirror symmetry, hypertoric geometry, toric topology, and the study of compactified Jacobians and Hitchin systems. We refer to  \cite{Sun25} for a more dedicated discussion of their history and ubiquity, but let us mention in particular their recent appearance in Section 7.3 of \cite{Low24}, wherein broken toric varieties are used as a tool to give an example of a singular proper toric variety over a finite field with nontrivial negative equivariant $K$-theory.

The broken toric varieties on which we focus are those appearing in the study of hypertoric Hitchin systems. Let us begin here with some motivation by reviewing some of the geometry of the moduli space of Higgs bundles and explain its relationship with the hypertoric Hitchin systems. 
 
 \begin{definition}
 Let $X$ be a compact Riemann surface of genus $g\geq 2$. A \emph{Higgs bundle} on $X$ consists of a holomorphic vector bundle $E$ on $X$ along with a twisted endomorphism $\phi:E\to E\otimes K_X$. A Higgs bundle is \emph{stable} if $\frac{\text{deg}F}{\text{rk}F}<\frac{\text{deg}E}{\text{rk}E}$ for all sub-bundles $F\subset E$ for which $\phi(F) \subseteq F\otimes K_X$.
 \end{definition}
  The moduli space  $\mathcal{M}_X(r,d)$ of rank $r$ degree $d$ stable Higgs bundles on $X$ is called the \emph{Hitchin system}. These moduli spaces have more fascinating structures than one can shake a stick at: they are hyperk\"ahler completely integrable systems which come equipped with a proper morphism, the \emph{Hitchin map} $h$, to an affine space (the space of spectral curves) whose generic fibres are abelian varieties. Through the non-Abelian Hodge theorem there is a one-to-one correspondence between stable Higgs bundles on $X$, flat connections on $X$, and $\text{GL}_r$-representations of the fundamental group of $X$.  These three moduli spaces are sometimes called the \emph{Dolbeault}, \emph{de Rham}, and \emph{Betti} moduli spaces, respectively.

Importantly for our purposes, the BNR Correspondence \cite{BNR89} states that for a smooth spectral curve $\Sigma$, the fibre $h^{-1}(\Sigma)$ is the Jacobian variety of $\Sigma$.  This also works for nodal spectral curves, where the fibre is a fine compactified Jacobian. Note that \emph{a priori} there is more than one way to compactify a Jacobian, depending on a stability condition, but the cohomology of the compactification is independent of the chosen (generic) stability condition by \cite{MSV21}. Further, it follows from Proposition 7.5.1 in \cite{N10} that $$H^\bullet(\overline{\text{Jac}(\Sigma)})\cong H^\bullet(\text{Jac}(\tilde{\Sigma}))\otimes D(\Sigma),$$
where $\tilde{\Sigma}$ is the normalization of $\Sigma$ and $D(\Gamma_\Sigma)$ is some vector space depending only on the dual graph\footnote{The \emph{dual graph} of a nodal curve $\Sigma$ consists of a vertex for every component of $\Sigma$ and an edge between two vertices for every node between their corresponding components.} $\Gamma_\Sigma$ of $\Sigma$.

In the case that all of the components of $\Sigma$ are rational, $H^\bullet(\overline{\text{Jac}(\Sigma)})\cong D(\Sigma)$ since the Jacobians that would be in play are trivial. We can alternatively describe the compactified Jacobian as a broken toric variety.  The Jacobian $\text{Jac}(\Sigma) \cong (\CC^*)^d$ (where $d$ is the number of components of $X$) acts on $\overline{\text{Jac}(\Sigma)}$ by tensoring, and as a subvariety it is dense by construction.  Taking the quotient by the compact part $U(1)^d$ of $(\CC^*)^d$ yields a periodic hyperplane arrangement as described in \cite{OS79}. 

On the other hand, in an unpublished note from 2015 Hausel and Proudfoot described a special kind of hypertoric variety associated to a periodic hyperplane arrangement in $\RR^n$. These varieties have three incarnations suggestively named the Dolbeault $\mathfrak{D}$, Betti $\mathfrak{B}$, and de Rham $\mathfrak{dR}$ hypertoric varieties.  Together they are known as \emph{multiplicative hypertoric varieties} or (due to subsequent authors) \emph{Hausel-Proudfoot varieties}.  The motivation for studying such objects comes from several sources.  First, the multiplicative hypertoric varieties really do arise naturally as multiplicative versions of the additive hypertoric varieties of Bielawski--Dancer \cite{BD00} and Konno \cite{Ko00}.  Second, they can be thought of as a generalization of the multiplicative quiver constructions of Crawley-Boevey--Shaw \cite{CBS06}, objects which are themselves currently attracting some attention, e.g. \cite{Jo14, Ma19}.

Our attention is on the Dolbeault multiplicative hypertoric varieties (or \emph{hypertoric Hitchin systems}). They come equipped with a proper morphism $h$ to an affine space and indeed with the structure of an integrable system. The generic fibres of this morphism are abelian varieties and there is a deformation retract to the central fibre which turns out to be broken toric. One constructs the hypertoric Hitchin systems by starting with the simplest possible example $\frakD(\loopquiver)$ and then obtaining others as quasi-hyperk\"ahler reductions of products of this ``base space", controlled by the further ingredient of a connected graph $\Gamma$. The hypertoric Hitchin system associated to the graph $\Gamma$ is denoted $\frakD(\Gamma)$.

The periodic hyperplane arrangement arising from a compacitified Jacobian $\overline{\text{Jac}(\Sigma)}$ is the same periodic hyperplane arrangement which describes the polytopal complex of the broken toric central fibre of $\frakD(\Gamma_\Sigma)$, as we will describe in Subsection \ref{props}.  In particular, this shows that in the case of all rational components, $H^\bullet(\overline{\text{Jac}(\Sigma)})\cong  H^\bullet (\frakD(\Gamma_\Sigma))$. It is true for any nodal spectral curve that $D(\Sigma) \cong H^\bullet(\frakD(\Gamma_\Sigma))$ by Remark 10.3.5 of \cite{DMS24}. In this sense, the hypertoric Hitchin systems are combinatorial toy models for the cohomology around singular fibres of the Hitchin system.

Applying the tools of balloon animal maps to the broken toric varieties appearing in hypertoric Hitchin systems allows us to give a new proof of the following result of Dansco--McBreen--Shende.

\begin{manualtheorem}{\ref{thm_delcon} (Theorem 6.43 of \cite{DMS24})}
Let $\Gamma$ be a graph and $e$ be an edge of $\Gamma$ which is not a loop. Then there is a long exact sequence relating the cohomologies of hypertoric Hitchin systems:
\begin{equation*}
\dots\to H^{k}(\frakD(\Gamma),\underline{\QQ}_{\frakD(\Gamma)})\to H^k(\frakD(\Gamma/e),\underline{\QQ}_{\frakD(\Gamma/e)})\to H^{k-1}(\frakD(\Gamma \setminus e),\underline{\QQ}_{\frakD(\Gamma \setminus e)})\to\dots
\end{equation*}
\end{manualtheorem}

 We can also refine this to new statements at the level of polytopal complexes.
 
 \begin{manualtheorem}{\ref{dcpf}}
Let $\Gamma$ be a graph and $e$ be an edge of $\Gamma$ which is not a loop. Then there is a long exact sequence
\begin{equation*}
\dots\to H^{k}(T^n, R^{i}f_*\underline{\QQ}_{\frakD(\Gamma)})\to H^{k}(T^n, R^{i}f_*\underline{\QQ}_{\frakD(\Gamma/ e)})\to H^{k}(T^{n-1}, R^{i-1}f_*\underline{\QQ}_{\frakD(\Gamma\setminus e)}) \to\dots
\end{equation*}
\end{manualtheorem}

 \begin{manualtheorem}{\ref{smalldelcon}}
Let $\Gamma$ be a graph with $b_1(\Gamma) =n \geq 2$ and $e$ an edge of $\Gamma$ which is incident to a vertex of degree $2$. Then 
\begin{equation*}
H^k(T^n, R^if_*\underline{\QQ}_{\Gamma}) \cong H^k(T^n, R^if_*\underline{\QQ}_{\Gamma / e}) \oplus H^{k-1}(T^n, R^{i-1}f_*\underline{\QQ}_{\Gamma\setminus e}).
\end{equation*}
\end{manualtheorem}

Theorem \ref{smalldelcon} is especially useful for the following reason. If we are interested in calculating the cohomology groups of $\frakD(\Gamma)$ with $b_1(\Gamma)=n$ and no disconnecting vertices\footnote{ Let $\Gamma$ be a connected graph. The \emph{induced subgraph} of a subset of edges $E'\subset E(\Gamma)$ is the subgraph of $\Gamma$ containing only the edges of $E'$ and all vertices which those edges are incident to. A vertex $v$ of $\Gamma$ is a \emph{disconnecting vertex} if the set of edges of $E(\Gamma)$ of $\Gamma$ can be partitioned into two subsets $E_1,E_2$ such that the intersection of the induced subgraphs of $E_1$ and $E_2$ is $\{v\}$.}, it is enough to know the cohomology groups of $\frakD(\Gamma')$ for all graphs $\Gamma'$ with $b_1(\Gamma)\leq n$, no disconnecting vertices, and no vertices of degree $2$. This idea is made precise in Corollary \ref{intermsofbase}.

Disconnecting vertices in $\Gamma$ are also no real obstacle to calculating cohomology groups:

\begin{manualcorollary}{\ref{disconcorr}}
Let $v$ be a disconnecting vertex of a connected graph $\Gamma$ with $b_1(\Gamma)=n$ so that $\Gamma$ can be written as the union of a finite number of connected graphs $\Gamma_1,\ldots,\Gamma_m$ with $\bigcap_{i=1}^m\Gamma_i = \{v\}$.  Then the Poincar\'e polynomial of $\frakD(\Gamma)$ is the product of the Poincar\'e polynomials of $\frakD(\Gamma_i)$, or equivalently
$$H^k(\frakD(\Gamma),\underline{\QQ}_{\frakD(\Gamma)}) \cong \bigoplus_{k_1+\dots+k_m=k}\left(\bigotimes_{i=1}^m H^{k_i}(\frakD(\Gamma_i),\underline{\QQ}_{\frakD(\Gamma_i)})\right)$$
\end{manualcorollary}

Bringing this all together, we end with a closed form expression for the Betti numbers of the hypertoric Hitchin system associated to a graph $\Gamma$ with first Betti number $2$.

\begin{manualtheorem}{\ref{dim2}}
Let $\Gamma$ be a graph with $b_1(\Gamma) = 2$ and $t(\Gamma)$ be the number of spanning trees of $\Gamma$. Then
\begin{enumerate}
\item If $\Gamma$ has a disconnecting vertex, then the Poincar\'e polynomial of $\mathfrak{D}(\Gamma)$ is 
$$1+2y+(|E(\Gamma)|+1)y^2+|E(\Gamma)|y^3+t(\Gamma)y^4.$$
\item If $\Gamma$ does not have a disconnecting vertex, then the Poincar\'e polynomial of $\mathfrak{D}(\Gamma)$ is 
$$1+2y+|E(\Gamma)|y^2+(|E(\Gamma)|-1)y^3+t(\Gamma)y^4.$$
\end{enumerate}

\end{manualtheorem}

\noin\emph{Acknowledgements.} The author would like to thank Michael McBreen for interesting discussions regarding this work and Jakub Löwit for pointing out the appearance of broken toric varieties in his work in equivariant $K$-theory.  The auther also especially thanks Michael Groechenig for introducing him to this problem and for guiding him through working on it as part of his doctoral studies. This work was partially supported by a Malcolm Slingsby Robertson Fellowship in Mathematics at the University of Toronto.\\

\section{Broken Toric Varieties}\label{btv}

\begin{definition}\label{btvdef} A \emph{broken toric variety} is a reducible algebraic variety whose components are equidimensional smooth projective toric varieties whose pairwise intersections are torus-invariant closed toric subvarieties of each component. The \emph{polytopal complex} $P_\bullet$ of a broken toric variety $X$ is the union of the polytopes of the components of $X$ glued to each other so that if two components of X meet along a toric variety $X'$, then the polytopes of those components intersect as the polytope of $X'$. 
\end{definition}

Note that to any complex of polytopes $P_\bullet$ there may be more than one isomorphism class of broken toric varieties which have $P_\bullet$ as their associated polytopal complex. Given $P_\bullet$, a broken toric variety $X$ with components $X_i$ is fixed by choosing gluing isomorphisms $\alpha_{ij}$ from a toric subvariety in $X_i$ to an isomorphic subvariety in $X_j$ (satisfying the cocycle condition $\alpha_{ij}\alpha_{jk} = \alpha_{ik}$). Such gluing data determines an element of $H^1(P_\bullet,T^n_\CC)$, a $T^n$-torsor on $P_\bullet$. We further note that there is not necessarily a bijection of $H^1(P_\bullet,T^n_\CC)$ with the space of isomorphism classes of broken toric varieties over $P_\bullet$\textemdash{}that is to say, different gluing isomorphisms may lead to isomorphic (as algebraic varieties) broken toric varieties.  This leads directly to the following lemma.

\begin{lemma}\label{torsor_action}
There is a transitive action of $H^1(P_\bullet,T^n_\CC)$ on the set of isomorphism classes of broken toric varieties with polytopal complex $P_\bullet$ for which $\alpha\in H^1(P_\bullet,T^n_\CC)$ acts on the broken toric variety $X$ with gluing data described by $\tau\in H^1(P_\bullet,T^n_\CC)$ by sending $X$ to the broken toric variety with gluing data described by $\alpha\tau$.
\end{lemma}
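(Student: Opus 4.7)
The plan is to verify the group action axioms, where well-definedness on cohomology classes is the essential step and transitivity follows formally. Unpacking the action: a broken toric variety $X$ with gluing cocycle $\tau=(\tau_{ij})$, acted on by a 1-cocycle $\alpha=(\alpha_{ij})$, should yield the broken toric variety $\alpha\cdot X$ with the same components $X_i$ but new gluing cocycle $\alpha\tau := (\alpha_{ij}\tau_{ij})$. Since $T^n_\CC$ is abelian, this product is again a 1-cocycle, so $\alpha\cdot X$ is a bona fide broken toric variety with polytopal complex $P_\bullet$. Associativity of the action and the triviality of the identity cocycle are then immediate from pointwise multiplication of cocycles.

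The heart of the argument is to check that the action descends from cocycles to cohomology classes: if $\alpha$ is replaced by a cohomologous cocycle $\alpha'$, meaning $\alpha'_{ij} = (s_j s_i^{-1})\alpha_{ij}$ for some 0-cochain $s = (s_i) \in \prod_i T^n_\CC$, then $\alpha\cdot X \cong \alpha'\cdot X$. I would construct the isomorphism $\varphi : \alpha\cdot X \to \alpha'\cdot X$ component-wise by letting $\varphi|_{X_i}$ act as the element $s_i \in T^n_\CC \subset X_i$. Using the fact that each gluing $\tau_{ij}$ is torus-equivariant, a direct calculation shows that these component-wise automorphisms patch along the new gluings $\alpha'_{ij}\tau_{ij}$, so $\varphi$ is a well-defined isomorphism of broken toric varieties.

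With well-definedness in hand, transitivity is formal: given any two broken toric varieties $X$ and $X'$ over $P_\bullet$ represented by gluing cocycles $\tau$ and $\tau'$, the ratio $\tau'\tau^{-1}$ is again a 1-cocycle (again using that $T^n_\CC$ is abelian), and the class $[\tau'\tau^{-1}] \in H^1(P_\bullet, T^n_\CC)$ acts on $X$ to produce a broken toric variety with gluing $\tau'$, which is $X'$ up to isomorphism.

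The main (mild) obstacle is the well-definedness step, and it is really a matter of bookkeeping: one must carefully track how $T^n_\CC$ acts on the toric intersection subvarieties from both the $X_i$ and $X_j$ sides, and verify that the coboundary relation precisely compensates for the failure of component-wise torus translations to respect the original gluings. Once the conventions are aligned the computation is routine, and the rest of the lemma follows.
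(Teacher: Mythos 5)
The paper states this lemma without a proof (it says the lemma follows ``directly'' from the preceding observations), so there is no proof in the paper to compare against verbatim. Your cocycle-level computations are correct: the isomorphism $\varphi|_{X_i} = s_i$ does intertwine the gluings $\alpha_{ij}\tau_{ij}$ and $(s_js_i^{-1})\alpha_{ij}\tau_{ij}$ because the gluing maps are torus-equivariant, and the transitivity argument via $[\tau'\tau^{-1}]$ is fine.

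However, there is a gap that you should at least acknowledge. You verify that $\alpha\cdot X$ does not depend on the choice of cocycle representing $\alpha \in H^1(P_\bullet,T^n_\CC)$, i.e.\ that cohomologous $\alpha, \alpha'$ yield isomorphic results. But you do not verify that the result depends only on the \emph{isomorphism class} of $X$. The paper explicitly warns, immediately before the lemma, that the map from $H^1(P_\bullet,T^n_\CC)$ to isomorphism classes of broken toric varieties need not be injective: there can exist non-cohomologous cocycles $\tau, \tau'$ with $X(\tau) \cong X(\tau')$ as algebraic varieties, via isomorphisms that permute components or use toric automorphisms outside the torus (lattice automorphisms in $\mathrm{GL}_n(\ZZ)$, for instance). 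For the action to descend from $H^1$ to the set of isomorphism classes, one must additionally show that $X(\tau)\cong X(\tau')$ implies $X(\alpha\tau)\cong X(\alpha\tau')$ for all $\alpha$. Your argument treats only the case where the isomorphism $X(\tau)\to X(\tau')$ comes from a coboundary of $0$-cochains in $T^n_\CC$, which is exactly the case where $[\tau]=[\tau']$; it does not address the coarser equivalence that the paper is explicitly calling attention to. If such an isomorphism $\psi$ acts on gluing data by conjugation $\tau_{ij}\mapsto g_j\tau_{ij}g_i^{-1}$ with the $g_i$ not central, then applying $\psi$ to $X(\alpha\tau)$ produces $X((g_*\alpha)\tau')$ rather than $X(\alpha\tau')$, and one would need an additional argument that $g_*\alpha$ and $\alpha$ yield isomorphic twists. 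This is the genuine content the ``directly'' in the paper is eliding, and it is the part of the lemma your proof leaves open.
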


Since we mostly care about cohomology (which is invariant under this action), we assume that the gluing data is trivial if the torsor is not mentioned.

There is a natural map $f : X \to P_\bullet$ given by gluing together the moment maps of the components of $X$ to their polytopes. In other words, it is the quotient map of $X$ by the compact part of the torus action on each of the components.

All of our cohomology calculations depend on the following statement.

\begin{manualtheorem}{(Theorem 3.4 of \cite{Sun25})}[]
For $X$ an $n$-dimensional broken toric variety and $f$ the map from $X$ to its polytopal complex $P_\bullet$, there is an isomorphism in the bounded derived category $\mathcal{D}^b_c(P_\bullet)$ of constructible sheaves on $P_\bullet$
$$Rf_*\underline{\QQ}_X \cong \bigoplus_{i=0}^{2n}R^if_*\underline{\QQ}_X[-i].$$
This implies the degeneration at the $E_2$ page of the Leray spectral sequence $E^{pq}_2 = H^p(P_\bullet, R^qf_*\underline{\QQ}_X)$ associated to $f$, as well as the cohomological decomposition
\begin{equation}\label{cohomdecomp}
H^i(X,\underline{\QQ}_X)\cong \bigoplus_{p+q=i}H^q(P_\bullet, R^pf_*\uQQ_X).
\end{equation}
\end{manualtheorem}

Let us introduce some particular hyperplanes that are used (essentially for bookkeeping) in Section \ref{section:bam}.

\begin{definition}\label{Halpha}
Let $X$ be a broken toric variety with polytopal complex $P_\bullet$. Given a $k$-cell $\alpha$ of $P_\bullet$ let $H_\alpha$ denote the $k$-dimensional linear subspace of $\left(\mathbb{R}^n\right)^*$ which contains (a shift of) the moment polytope of the $k$-dimensional smooth projective toric variety defined by $\alpha$. 
\end{definition}

One ought to check that this definition is consistent with the way the polytopal complex is glued together. This follows since moment polytopes are well-defined up to translation. Consider the set of $n$-cells of $P_\bullet$ which contain $\alpha$. The map $f$ restricted to each of the toric components of $X$ corresponding to those cells is a moment map and so we get for each toric component a convex subset of $\left(\mathbb{R}^n\right)^*$. By shifting each of these sets to the origin, we see that the image of the toric variety of $\alpha$ under $f$, as a subset of each, must live in their intersection.

\begin{lemma}\label{quot_descr}
Let $X$ be a broken toric variety with gluing data described by the $T^n$-torsor $\alpha$ and $f$ be the map from $X$ to its polytopal complex $P_\bullet$.  Lemma 2.3 of \cite{Sun25} states that $X$ is a quotient of the total space of $\alpha$ on $P_\bullet$. This quotient can be described fibrewise over $x\in\alpha\in\mathring{\text{Sk}}_k(P_\bullet)$ as the orthogonal projection which sends $T^n = \RR^n/\Gamma$ to \mbox{$H_\alpha^* / \Gamma\cap H^*_\alpha $ }where $\Gamma$ is the standard lattice in $ \RR^n$.
\end{lemma}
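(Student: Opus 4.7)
The plan is to reduce to a single smooth projective toric variety and then apply the standard orbit--face dictionary, being careful about duality.

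First, since $X$ is glued from smooth projective toric components $X_i$ with individual moment maps $\mu_i\colon X_i\to P_i$ and $f$ is assembled from these, the fibre $f^{-1}(x)$ over a point $x$ in the relative interior of a $k$-cell $\alpha\subset P_\bullet$ agrees with $\mu_i^{-1}(x)$ for any component $X_i$ whose polytope contains $\alpha$. The gluings along torus-invariant subvarieties are $T^n$-equivariant, so this identification is canonical and insensitive to the choice of component or of the $T^n$-torsor describing the gluing data. It therefore suffices to verify the description for a single $n$-dimensional smooth projective toric variety $X_\Sigma$ with dense torus $T^n=\RR^n/\Gamma$ and moment polytope contained in $(\RR^n)^*$.

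Next, I invoke the orbit--face correspondence for $X_\Sigma$: the moment-map fibre over any $x$ in the relative interior of a $k$-dimensional face $\alpha$ is the $k$-dimensional $T^n$-orbit $O_\alpha$ associated to that face. As a homogeneous space, $O_\alpha = T^n/T^{n-k}_\alpha$, and I claim the stabilizer's Lie algebra is the annihilator $H_\alpha^\perp\subset\RR^n$ of the $k$-dimensional subspace $H_\alpha\subset(\RR^n)^*$ from Definition \ref{Halpha}. This is the standard observation that the cone $\sigma\in\Sigma$ dual to $\alpha$ spans $H_\alpha^\perp$; equivalently, a one-parameter subgroup $\xi\in\RR^n$ acts trivially on $O_\alpha$ precisely when $\xi$ annihilates every direction tangent to $\alpha$ in the moment polytope, and those directions are spanned by $H_\alpha$.

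It remains to compute the quotient torus. Writing $T^{n-k}_\alpha = H_\alpha^\perp/(\Gamma\cap H_\alpha^\perp)$, one has
\[
T^n / T^{n-k}_\alpha \;\cong\; (\RR^n/H_\alpha^\perp)\bigm/\bar\Gamma,
\]
with $\bar\Gamma$ the image of $\Gamma$ in $\RR^n/H_\alpha^\perp$. The canonical pairing identifies $\RR^n/H_\alpha^\perp\cong H_\alpha^*$, which together with the standard inner product on $\RR^n$ turns the quotient map $\RR^n\to\RR^n/H_\alpha^\perp$ into the orthogonal projection $\RR^n\to H_\alpha^*$ and $\bar\Gamma$ into the lattice denoted $\Gamma\cap H_\alpha^*$ in the statement (read as the image of $\Gamma$ under this projection). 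The resulting fibrewise quotient $T^n\to H_\alpha^*/(\Gamma\cap H_\alpha^*)$ is the map claimed.

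The main obstacle is purely bookkeeping about duality: $H_\alpha$ sits in $(\RR^n)^*$ while $\Gamma$ sits in $\RR^n$, and one must interpret the lattice $\Gamma\cap H_\alpha^*$ as the image of $\Gamma$ under the orthogonal projection rather than as a literal intersection. Once this identification is fixed, the lemma is a direct translation of the standard orbit--face dictionary into the notation of Definition \ref{Halpha}.
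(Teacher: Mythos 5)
Your proof is correct and takes essentially the same route as the paper: reduce to a single smooth projective toric component, then read off the moment-map fibre as the torus quotient via the standard structure theory (you phrase it as the orbit--face correspondence; the paper cites the Delzant construction and the Davis--Januszkiewicz description of a toric variety as $T^n\times P/\sim$). Your explicit remark that $\Gamma\cap H_\alpha^*$ must be read as the image of $\Gamma$ under the orthogonal projection is a small but welcome clarification of a point the paper leaves implicit.
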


\begin{proof}
It is true that when restricted to any toric component of $X$ which contains $f^{-1}(x)$, the fibre of $f$ over $x$ is isomorphic to $H_\alpha^* / \Gamma\cap H_\alpha^* $ where $\Gamma$ is the standard lattice in $\RR^n$. This follows from the Delzant construction of a symplectic toric variety from its associated polytope (\cite{D88}, or see Sections 28 and 29 of \cite{CdS01} for a nice overview) from which one can describe the quotient procedure given in Lemma 1.4 of \cite{DJ91}: An $n$-dimensional (smooth projective) toric variety $X$ with polytope $X$ is obtained from $T^n\times P$ by quotienting the fibre over $\alpha\in\mathring{\text{Sk}}_k(P)$ by all directions orthogonal to $\alpha$.

Since the description of $H_\alpha$ is the same for each such component, the gluing respects it. So the equivalence relation which describes the quotient map takes $T^n = \RR^n/\Gamma$ and identifies all vectors normal to $H_\alpha^*$.
\end{proof}

\begin{corollary}\label{fibre_cohom}
Let $X$ be a broken toric variety, $f$ be the map from $X$ to its polytopal complex $P_\bullet$, and $x\in\alpha\in\mathring{\text{Sk}}_k(P_\bullet)$. Then $H^i(f^{-1}(x),\underline{\QQ}) \cong \bigwedge\nolimits^i H_\alpha$.
\end{corollary}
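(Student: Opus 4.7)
The plan is to reduce directly to the standard computation of the rational cohomology of a real torus, using the fibre description already established in Lemma \ref{quot_descr}.

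First, by Lemma \ref{quot_descr}, the fibre $f^{-1}(x)$ is canonically the quotient $H_\alpha^*/(\Gamma\cap H_\alpha^*)$, where we identify $(\RR^n)^*\cong\RR^n$ via the standard inner product so that $H_\alpha^*\subset\RR^n$ is a $k$-dimensional subspace. Because $\alpha$ is a face of a lattice polytope, $H_\alpha^*$ is rationally defined, so $\Lambda := \Gamma\cap H_\alpha^*$ is a rank-$k$ lattice in $H_\alpha^*$ and the fibre is a real $k$-torus $T_\alpha := H_\alpha^*/\Lambda$.

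Next, I would invoke the standard fact that for any real torus $V/\Lambda$ of dimension $k$, the rational cohomology ring is the exterior algebra
\begin{equation*}
H^\bullet(V/\Lambda,\underline{\QQ}) \;\cong\; \bigwedge\nolimits^\bullet \Hom_{\ZZ}(\Lambda,\QQ),
\end{equation*}
so in particular $H^i(T_\alpha,\underline{\QQ})\cong \bigwedge^i\Hom_{\ZZ}(\Lambda,\QQ)$.

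Finally, I would identify $\Hom_{\ZZ}(\Lambda,\QQ)$ canonically with $H_\alpha$. The inclusion $H_\alpha^*\hookrightarrow \RR^n$ dualizes to a surjection $(\RR^n)^*\twoheadrightarrow (H_\alpha^*)^\vee$; since $H_\alpha\subset(\RR^n)^*$ is a linear complement of the annihilator of $H_\alpha^*$ and has the same dimension $k$ as $(H_\alpha^*)^\vee$, restriction of functionals gives an isomorphism $H_\alpha\xrightarrow{\sim}(H_\alpha^*)^\vee$, which takes the rational structure induced by $\Lambda$ to the rational structure induced by the lattice dual to $\Gamma$. Composing these identifications yields the desired isomorphism $H^i(f^{-1}(x),\underline{\QQ})\cong \bigwedge^i H_\alpha$.

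The only mild subtlety is keeping straight the duality between the subspace $H_\alpha\subset(\RR^n)^*$ and the subspace $H_\alpha^*\subset\RR^n$; once that identification is fixed consistently with the lattice $\Gamma$, the corollary is immediate from Lemma \ref{quot_descr} and the exterior algebra description of torus cohomology.
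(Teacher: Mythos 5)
Your proof is correct and follows the same route the paper intends: it deduces the statement from Lemma \ref{quot_descr} by recognizing the fibre as a real $k$-torus and invoking the exterior-algebra description of torus cohomology. The paper actually states the corollary with no proof at all, treating it as immediate from the lemma; your write-up simply makes explicit the identification $\Hom_{\ZZ}(\Gamma\cap H_\alpha^*,\QQ)\cong H_\alpha$ (via the inner-product duality between $H_\alpha\subset(\RR^n)^*$ and $H_\alpha^*\subset\RR^n$) that the paper leaves implicit, which is a reasonable amount of care given that the paper's statement conflates a real vector space $\bigwedge^i H_\alpha$ with a rational cohomology group.
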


This concrete description of the fibres of the higher derived pushforwards can be quite useful for calculation. Let us end this section with an explicit cohomology calculation using these ideas.

\begin{lemma}\label{gribby}
For $X$ an $n$-dimensional broken toric variety, one has 
\begin{equation*}
H^j(P_\bullet,R^if_*\underline{\QQ}_X|_{\mathring{\text{Sk}}_i}) = \begin{cases}
\QQ^{|\mathring{\text{Sk}}_i(P_\bullet)|} , &j=i,\\
0, &\text{otherwise}
\end{cases}
\end{equation*}
\end{lemma}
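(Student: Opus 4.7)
The strategy is to identify $R^if_*\uQQ_X|_{\mathring{\text{Sk}}_i}$ concretely using Corollary \ref{fibre_cohom} and then invoke the compactly supported cohomology of an open ball. By Corollary \ref{fibre_cohom}, the stalk of $R^if_*\uQQ_X$ at a point $x$ in the interior of an $i$-cell $\alpha$ is $\bigwedge^i H_\alpha$. Since $\dim H_\alpha = i$, this top exterior power is one-dimensional, and because $H_\alpha$ depends only on the cell $\alpha$ the restriction of $R^if_*\uQQ_X$ to each $\mathring\alpha$ with $\alpha \in \mathring{\text{Sk}}_i(P_\bullet)$ is the constant sheaf $\uQQ_{\mathring\alpha}$. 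Hence $R^if_*\uQQ_X|_{\mathring{\text{Sk}}_i}$ is the constant sheaf $\uQQ$ on the disjoint union $\bigsqcup_{\alpha} \mathring\alpha$ of interiors of $i$-cells, each of which is homeomorphic to an open $i$-ball.

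Interpreting $\mathcal{F}|_{\mathring{\text{Sk}}_i}$ as extension by zero along the locally closed inclusion $j\colon \mathring{\text{Sk}}_i \hookrightarrow P_\bullet$, one has
$$H^j(P_\bullet, j_!\uQQ) \cong H^j_c(\mathring{\text{Sk}}_i,\uQQ) \cong \bigoplus_{\alpha \in \mathring{\text{Sk}}_i(P_\bullet)} H^j_c(\mathring\alpha,\QQ),$$
since compactly supported cohomology distributes over disjoint unions. The standard computation $H^j_c(\RR^i,\QQ) = \QQ$ for $j = i$ and $0$ otherwise then immediately yields the claim.

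The calculation is essentially a one-liner once the sheaf has been identified. The only conceptual point to be careful about is the convention for the restriction symbol in the statement, which must be read as extension by zero (equivalently, as compactly supported cohomology on $\mathring{\text{Sk}}_i$) in order for the nonzero contribution to land in degree $i$ rather than degree $0$ — this is also the convention that makes the statement consistent with the cellular-sheaf viewpoint alluded to for Section \ref{section:cellular}, where $R^if_*\uQQ_X|_{\mathring{\text{Sk}}_i}$ corresponds to the degree-$i$ part of the cellular cochain complex of $R^if_*\uQQ_X$.
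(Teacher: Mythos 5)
Your proof is correct and takes essentially the same route as the paper: both reduce the problem to computing $H^\bullet_c(\mathring{\text{Sk}}_i,\underline{\QQ})$ on a disjoint union of open $i$-cells and then invoke the standard computation for open balls (the paper phrases the last step as Poincar\'e duality, you phrase it as the fact that $H^j_c(\RR^i,\QQ)$ is concentrated in degree $i$; these are the same). The only presentational difference is that the paper also cites a vanishing theorem from \cite{Sun25} for $H^j(P_\bullet,R^if_*\underline{\QQ}_X)$ with $j<i$ as scaffolding, whereas you dispense with that and get the full answer directly by identifying $R^if_*\underline{\QQ}_X|_{\mathring{\text{Sk}}_i}$ with the constant sheaf via Corollary \ref{fibre_cohom} and extension by zero along $j\colon\mathring{\text{Sk}}_i\hookrightarrow P_\bullet$, which is slightly more self-contained. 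One small point worth making explicit, since you quietly use it: constancy of the restriction to each $\mathring\alpha$ is guaranteed not just by the stalk being one-dimensional but by cell-compatibility (local constancy on each open cell) of $R^if_*\underline{\QQ}_X$, which is implicit in the paper's framework.
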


\begin{proof}
Lemma 2.17 of \cite{Sun25} states that $H^j(P_\bullet,R^if_*\underline{\QQ}_X) = 0$ for all $j<i$, using a version of Corollary \ref{fibre_cohom} as a main ingredient. Then the localization sequence in compactly supported cohomology can be used to find $H^i(P_\bullet,R^if_*\uQQ_X|_{\mathring{\text{Sk}}_i})\cong H^i_c(\mathring{\text{Sk}}_i,\uQQ_{\mathring{\text{Sk}}_i})$, and the result then follows by Poincar\'e duality.
\end{proof}

\section{Balloon Animal Maps}\label{section:bam}

In this section we will construct long exact sequences comparing the cohomology of broken toric varieties related to each other via balloon animal maps. 

\begin{definition} A subdivision $P_\bullet^{\cup}$ of an $n$-dimensional polytopal complex $P_\bullet$ is a polytopal complex with the property that every polytope in $P_\bullet$ is a union of polytopes of $P_\bullet^\cup$. A subdivision is called \emph{generic} if every  $k$-face $\alpha$ of $P_\bullet^\cup$ which does not lie in the $k$-skeleton of $P_\bullet$ has the property that $\partial \alpha \subset\mathring{\text{Sk}}_k(P_\bullet)$.
\end{definition}

A generic subdivision $P_\bullet^\cup$ of an $n$-dimensional polytopal complex $P_\bullet$ gives rise to a third polytopal complex, $P^{\cap}_\bullet$, defined as the $(n-1)$-dimensional polytopal complex consisting of all faces among the $P_\bullet^\cup$ which are of dimension $k\leq n-1$ and which do not lie in a $k$-face of $P_\bullet$.

\begin{example}\label{subdivs} Pictured below are three subdivisions of a square $P_\bullet$. The first two are not generic while the third is, for which $P^{\cap}_\bullet$ is also pictured.

\begin{center}
\begin{tikzpicture}
\fill[gray!40!white] (3,-1) rectangle (5,1);
\draw (3,-1) -- (5,-1) -- (5,1) -- (3,1) -- cycle;
\draw (3,1) -- (5,-1);
\node (b) at (3,-1){$\bullet$};
\node (c) at (5,-1){$\bullet$};
\node (d) at (5,1){$\bullet$};
\node (e) at (3,1){$\bullet$};

\node (b) at (4,-1.5){};

\fill[gray!40!white] (0,-1) rectangle (2,1);
\draw (0,-1) -- (2,-1) -- (2,1) -- (0,1) -- cycle;
\node (b) at (0,-1){$\bullet$};
\node (c) at (2,-1){$\bullet$};
\node (d) at (0,1){$\bullet$};
\node (e) at (2,1){$\bullet$};
\node (e) at (1,0){$\bullet$};
\node (e) at (0,0){$\bullet$};
\node (e) at (2,0){$\bullet$};
\draw (0,0) -- (2,0);
\node (b) at (1,-1.5){};

\fill[gray!40!white] (6,-1) rectangle (8,1);
\draw (6,-1) -- (8,-1) -- (8,1) -- (6,1) -- cycle;
\node (b) at (6,-1){$\bullet$};
\node (c) at (8,-1){$\bullet$};
\node (d) at (8,1){$\bullet$};
\node (e) at (6,1){$\bullet$};
\node (f) at (7,1){$\bullet$};
\node (g) at (7,-1){$\bullet$};
\draw (7,-1)--(7,1);
\node (b) at (7,-1.5){$P^{\cup}_\bullet$};

\node (f) at (9,1){$\bullet$};
\node (g) at (9,-1){$\bullet$};
\draw (9,-1)--(9,1);
\node (b) at (9,-1.5){$P^{\cap}_\bullet$};
\end{tikzpicture}
\end{center}

\end{example}

Now let $X$ be a broken toric variety over $P_\bullet$ with the gluing of its toric components determined by the $T^n$-torsor $\alpha\in H^1(P_\bullet,T^n_\CC)$. Since the polytopal complex $P_\bullet^{\cup}$ is of the same topological type as $P_\bullet$, $\alpha$ also defines a broken toric variety $X^{\cup}$ over $P_\bullet^{\cup}$.

With this setup we can define a map between $X$ and $X^\cup$. The geometric realizations of $P_\bullet$ and $P_\bullet^\cup$ are the same, and $X$ and $X^\cup$ are determined by the same $T^n$ torsor, whose total space we write as $\calT$. So both $X$ and $X^\cap$ are quotients of $\calT$ and the proof of Lemma \ref{quot_descr} describes the equivalence relation defining the quotients. Each equivalence relation depends only on the polytopal complex: more specifically it is fibrewise determined at $x$ by the dimension $k$ for which $x$ lies in the open $k$-skeleton of the polytopal complex. Since $P_\bullet^\cup$ can be viewed as a refinement of $P_\bullet$, the equivalence relation defining $X^\cup$ is a refinement of the one defining $X$. This induces a map from $X$ to $X^\cup$.

\[\begin{tikzcd}
	& {\mathcal{T}} \\
	X && {X^{\cup}}
	\arrow["r"', from=1-2, to=2-1]
	\arrow["{r^{\cup}}", from=1-2, to=2-3]
	\arrow["b", dashed, from=2-1, to=2-3]
\end{tikzcd}\]

\begin{definition}\label{bam_defn}
Given a generic subdivision $P_\bullet^\cup$ of $P_\bullet$, the \emph{balloon animal map} is the continuous map $b:X\to X^\cup$ induced by the refinement of the equivalence relations defining $X$ and $X^\cup$.
\end{definition}

\begin{example}
One example to have in mind is that of the polytopal complexes $P_\bullet = \loopquiver$ and $P_\bullet^{\cup}=\diquiver$.  In this case we have a map of broken toric varieties where $X$ is the torus with one pinched point and $X^{\cup}$ is the torus with two pinched points (so we are ``twisting'' the ``balloon animal'').
\end{example}

\begin{lemma}\label{lem_bam}
Let $b:X\to X^{\cup}$ be a balloon animal map between broken toric varieties induced by a generic subdivision of the polytopal complex of $X$ and $X^{\cap}$ be the broken toric subvariety of $X^\cup$ associated to $P^{\cap}_\bullet$. Then
\begin{enumerate}
\item $R^1b_*\underline{\QQ}_X \cong \underline{\QQ}_{X^{\cap}}.$
\item There is a distinguished triangle in the derived category of sheaves on $X^{\cup}$
$$\underline{\QQ}_{X^{\cup}}\to Rb_*\underline{\QQ}_X\to\underline{\QQ}_{X^\cap}[-1]\xrightarrow[]{+1}$$
\end{enumerate}
\end{lemma}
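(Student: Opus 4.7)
The plan is to analyze the fibers of $b$ geometrically, derive the stalks of $R^i b_* \underline{\QQ}_X$ via proper base change, and then extract the distinguished triangle from the canonical truncation of $Rb_* \underline{\QQ}_X$. For the fiber analysis, note that both $X$ and $X^\cup$ are quotients of the same torsor $\calT$ on the common geometric realization $|P_\bullet| = |P^\cup_\bullet|$, with the fibrewise quotient determined by which cell contains the base point (Lemma \ref{quot_descr}). At $x$ in the interiors of cells $\alpha' \subset \alpha$ of $P^\cup_\bullet$ and $P_\bullet$, the fibers of $X \to P_\bullet$ and $X^\cup \to P^\cup_\bullet$ are tori of dimensions $\dim \alpha$ and $\dim \alpha'$, and $b$ restricts on fibers to a torus projection whose kernel is a torus of dimension $\dim \alpha - \dim \alpha'$. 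For $\alpha' \in P_\bullet$ this kernel is trivial; for $\alpha' \in P^\cap_\bullet$ the genericity condition $\partial \alpha' \subset \mathring{\text{Sk}}_{\dim \alpha'}(P_\bullet)$ ensures $\dim \alpha = \dim \alpha' + 1$, making the fiber of $b$ a circle.

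Since $b$ is proper, proper base change yields $(R^i b_* \underline{\QQ}_X)_y \cong H^i(b^{-1}(y), \QQ)$. This gives $R^0 b_* \underline{\QQ}_X \cong \underline{\QQ}_{X^\cup}$ from the connected fibers; $R^i b_* \underline{\QQ}_X = 0$ for $i \geq 2$ from fibers of dimension at most one; and $R^1 b_* \underline{\QQ}_X$ with stalk $\QQ$ on $X^\cap$ and zero elsewhere. The main obstacle is upgrading this stalk description of $R^1$ to the sheaf isomorphism $R^1 b_* \underline{\QQ}_X \cong \underline{\QQ}_{X^\cap}$. By Corollary \ref{fibre_cohom}, the $H^1$ of the circle fiber over a point in the interior of $\alpha'$ is canonically the one-dimensional quotient $H_\alpha / H_{\alpha'}$, and one must verify that these lines assemble into a rank-one trivial local system on $X^\cap$; this should follow from the consistency of the $H_\alpha$ across ambient cells established after Definition \ref{Halpha}.

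For part (2), apply the canonical truncation triangle to $K := Rb_* \underline{\QQ}_X$ in the standard $t$-structure. Since the cohomology sheaves are concentrated in degrees $0$ and $1$ with $R^0 b_* \underline{\QQ}_X = \underline{\QQ}_{X^\cup}$ and $R^1 b_* \underline{\QQ}_X = \underline{\QQ}_{X^\cap}$, we have $\tau_{\leq 0} K \cong \underline{\QQ}_{X^\cup}$ and $\tau_{\geq 1} K \cong \underline{\QQ}_{X^\cap}[-1]$, so the truncation triangle reads
$$\underline{\QQ}_{X^\cup} \to Rb_* \underline{\QQ}_X \to \underline{\QQ}_{X^\cap}[-1] \xrightarrow[]{+1}$$
as claimed, with the first arrow identified with the canonical unit of the adjunction $(b^*, Rb_*)$.
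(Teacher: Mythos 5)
Your treatment of part (2) is essentially identical to the paper's: truncate $Rb_*\underline{\QQ}_X$, use the fiber dimension bound to see the complex is concentrated in degrees $0$ and $1$, identify $\tau_{\leq 0}$ with $\underline{\QQ}_{X^\cup}$, and feed in part (1). Your fiber analysis and proper base change argument for the stalks of $R^ib_*\underline{\QQ}_X$ also agree with what the paper does. (One small imprecision: genericity of the single cell $\alpha'$ does not by itself force $\dim\alpha = \dim\alpha' + 1$; it is genericity imposed on \emph{all} cells of the subdivision that rules out $\dim\alpha > \dim\alpha'+1$, since an intermediate cell of $P^\cup_\bullet$ adjacent to $\alpha'$ would then have boundary in too low a skeleton. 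The conclusion is right, the attribution is loose.)

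The genuine gap is in part (1), and it is exactly the step you flag as the ``main obstacle'' and then do not close. Knowing that $R^1b_*\underline{\QQ}_X$ is supported on $X^\cap$ with one-dimensional stalks $H_\alpha/H_{\alpha'}$ does not yet give an isomorphism of sheaves with $\underline{\QQ}_{X^\cap}$; one must show these lines fit together into a \emph{constant} rank-one sheaf as one crosses between cells of $P^\cap_\bullet$. You assert that ``this should follow from the consistency of the $H_\alpha$,'' but the remark after Definition \ref{Halpha} only establishes that each individual subspace $H_\alpha$ is well defined; it says nothing about the behaviour of the quotients $H_\alpha/H_{\alpha'}$ under the restriction maps of $R^1b_*\underline{\QQ}_X$, which is the actual content. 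This is precisely why the paper postpones the proof of part (1) to Section \ref{section:cellular}: it passes to the associated cellular sheaves on the polytopal complex, uses the sheafification description of $R^1b_*\underline{\QQ}_X$ to verify that each restriction map carries the canonical generator of one stalk to that of the next, and thereby exhibits an explicit cellular-sheaf isomorphism with the one associated to $\underline{\QQ}_{X^\cap}$. Some argument of this kind is required to complete your proof of part (1), and hence of the lemma.
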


\begin{proof}
The proof of part $(1)$ is easiest to state using the technology of cellular sheaf cohomology and so is postponed to Section \ref{section:cellular}.

For part ($2$) consider the distinguished triangle from truncation of the pushforward of the constant sheaf on $X$ along the balloon animal map:
\begin{equation*}\tau_{\leq 0}Rb_*\underline{\QQ}_{X}\to Rb_*\underline{\QQ}_{X}\to \tau_{\geq 1}Rb_*\underline{\QQ}_{X}\xrightarrow[]{+1}.
\end{equation*}

By definition of the truncation functor and the fact that $Rb_*\underline{\QQ}_{X}$ is only supported in non-negative degree, $\tau_{\leq 0}Rb_*\underline{\QQ}_{X} \cong R^0Rb_*\underline{\QQ}_{X}\cong \underline{\QQ}_{X^\cup}$. The fibres of $b$ are either $0$- or $1$-dimensional, and so for dimension reasons we have that $R^ib_*\underline{\QQ}_{X}$ is zero for all $i>1$. This implies that $\tau_{\geq 1}Rb_*\underline{\QQ}_{X} = R^1b_*\underline{\QQ}_{X}[-1]$ and so applying part $(1)$ yields the desired statement.
\end{proof}

The assumption of genericity of the subdivision is fundamental in Lemma \ref{lem_bam}, and so for all the results as stated in this section. The main issue is that for balloon animal maps induced by non-generic subdivisions we cannot expect that $R^1b_*\underline{\QQ}_X\cong \underline{\QQ}_{X^\cap}$. Consider the non-generic subdivision of the square into two triangles (pictured in Example \ref{subdivs}).  The stalks of $R^1b_*\underline{\QQ}_X$ are $1$-dimensional over $\mathring{\text{Sk}}_1(P_\bullet^\cap)$ but $0$-dimensional over the $0$-cells.

\begin{theorem}\label{bigballoonpf} Let $b:X\to X^{\cup}$ be a balloon animal map between broken toric varieties induced by a generic subdivision of the polytopal complex of $X$ and $X^{\cap}$ be the broken toric subvariety of $X^\cup$ associated to $P^{\cap}_\bullet$. Then there is a long exact sequence
\begin{equation*}
\dots\to H^{k}(X^\cup,\underline{\QQ}_{X^{\cup}})\to H^k(X,\underline{\QQ}_X)\to H^{k-1}(X^\cap,\underline{\QQ}_{X^\cap})\to\dots.
\end{equation*}
\end{theorem}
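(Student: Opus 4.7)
The plan is to deduce this long exact sequence directly from part (2) of Lemma \ref{lem_bam} by applying the derived global sections functor $R\Gamma(X^\cup,-)$ to the distinguished triangle
$$\underline{\QQ}_{X^{\cup}}\to Rb_*\underline{\QQ}_X\to\underline{\QQ}_{X^\cap}[-1]\xrightarrow[]{+1}$$
living in the derived category of sheaves on $X^\cup$. Since $R\Gamma(X^\cup,-)$ is a triangulated functor, it carries this distinguished triangle to a distinguished triangle in the derived category of $\QQ$-vector spaces, whose associated long exact sequence of cohomology is exactly the one claimed, after identifying each term.

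For the identifications, the first term is $R\Gamma(X^\cup,\underline{\QQ}_{X^\cup})$, which computes $H^\bullet(X^\cup,\underline{\QQ}_{X^\cup})$ by definition. For the middle term I would use that $R\Gamma(X^\cup,-)\circ Rb_*\cong R\Gamma(X,-)$ as derived functors (by the composition of derived functors applied to $b$ followed by the constant map $X^\cup\to\text{pt}$), giving $R\Gamma(X^\cup, Rb_*\underline{\QQ}_X)\cong R\Gamma(X,\underline{\QQ}_X)$ and hence $H^\bullet(X,\underline{\QQ}_X)$. For the third term, since $\underline{\QQ}_{X^\cap}$ is the pushforward of the constant sheaf under the closed inclusion $\iota:X^\cap\hookrightarrow X^\cup$, and since $\iota_* = R\iota_*$ for a closed embedding, we have $R\Gamma(X^\cup,\underline{\QQ}_{X^\cap}[-1])\cong R\Gamma(X^\cap,\underline{\QQ}_{X^\cap})[-1]$, whose $k$-th cohomology is $H^{k-1}(X^\cap,\underline{\QQ}_{X^\cap})$.

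Putting these identifications into the long exact sequence of the image triangle produces the stated sequence. The real content has already been packaged into Lemma \ref{lem_bam}(2), so there is no substantive obstacle at this stage; the only mild point worth being careful about is the shift convention in the third term, which I would double-check matches the indexing claimed (a shift by $[-1]$ on a sheaf raises the index in the long exact sequence by one, which is precisely why $H^{k-1}$ of $X^\cap$ appears next to $H^k$ of $X$ and $X^\cup$). The actual difficulty in this circle of results lies upstream, in establishing the distinguished triangle of Lemma \ref{lem_bam}, which in turn rests on the computation of $R^1b_*\underline{\QQ}_X$ deferred to Section \ref{section:cellular}; once that input is granted, the present theorem is essentially a formal consequence.
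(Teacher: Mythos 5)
Your argument is correct and is essentially identical to the paper's proof: both apply hypercohomology (equivalently, $R\Gamma$) to the distinguished triangle of Lemma \ref{lem_bam}(2) and use $\HH^\bullet(X^\cup,Rb_*\underline{\QQ}_X)\cong H^\bullet(X,\underline{\QQ}_X)$ to identify the middle term. Your extra remarks about the closed inclusion of $X^\cap$ and the $[-1]$ shift are just filling in routine details the paper leaves implicit.
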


\begin{proof}

This follows by applying the hypercohomology functor to the distinguished triangle of Lemma \ref{lem_bam} ($2$) and the fact that $\HH^\bullet(X^\cup,Rb_*\underline{\QQ}_{X}) \cong H^\bullet(X,\underline{\QQ}_{X})$.

\end{proof}

It turns out that the above statement can be refined further after passing to the higher derived pushforwards.

\begin{lemma}\label{bampushforward}
Let $b:X\to X^{\cup}$ be a balloon animal map between broken toric varieties induced by a generic subdivision of the polytopal complex of $X$ and $X^{\cap}$ be the broken toric subvariety of $X^\cup$ associated to $P^{\cap}_\bullet$.  For $i\geq 0$ there exist short exact sequences
\begin{equation*}
0 \to R^if_*\underline{\QQ}_{X^\cup}\to  R^if_*\underline{\QQ}_{X}\to  R^{i-1}f_*\underline{\QQ}_{X^\cap}\to 0.
\end{equation*}
\end{lemma}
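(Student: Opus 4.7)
The plan is to apply $Rf^{\cup}_*$ to the distinguished triangle of Lemma \ref{lem_bam}(2) and then argue that the resulting long exact sequence of cohomology sheaves splits into the claimed short exact sequences.

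Under the identifications $f^{\cup}\circ b = f$ (using $|P_\bullet|=|P^{\cup}_\bullet|$) and $f^{\cup}|_{X^{\cap}}=i_{P^{\cap}}\circ f^{\cap}$, where $i_{P^{\cap}}:|P^{\cap}_\bullet|\hookrightarrow |P^{\cup}_\bullet|$ is the natural inclusion, applying $Rf^{\cup}_*$ to
$$\uQQ_{X^{\cup}}\to Rb_*\uQQ_X\to \uQQ_{X^{\cap}}[-1]\xrightarrow{+1}$$
produces a distinguished triangle in $\mathcal{D}^b_c(|P^{\cup}_\bullet|)$,
$$Rf^{\cup}_*\uQQ_{X^{\cup}}\to Rf_*\uQQ_X \to (i_{P^{\cap}})_*Rf^{\cap}_*\uQQ_{X^{\cap}}[-1]\xrightarrow{+1}.$$
Taking cohomology sheaves produces a long exact sequence
$$\cdots \to (i_{P^{\cap}})_*R^{i-2}f^{\cap}_*\uQQ_{X^{\cap}} \to R^if^{\cup}_*\uQQ_{X^{\cup}} \xrightarrow{\phi_i} R^if_*\uQQ_X \to (i_{P^{\cap}})_*R^{i-1}f^{\cap}_*\uQQ_{X^{\cap}} \to \cdots$$
so to extract the desired short exact sequences it is enough to show that each $\phi_i$ is injective as a morphism of sheaves, which forces every connecting map in the long exact sequence to vanish.

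I would verify injectivity of $\phi_i$ stalkwise. Fix a point $x$ in the interior of a cell $\beta$ of $P^{\cup}_\bullet$, and let $\alpha$ denote the unique cell of $P_\bullet$ whose interior contains $x$, so that $\beta\subseteq\bar\alpha$ and hence $H_\beta\subseteq H_\alpha$. Properness of $f$ and $f^{\cup}$ combined with Corollary \ref{fibre_cohom} identifies the stalks as $(R^if^{\cup}_*\uQQ_{X^{\cup}})_x\cong \bigwedge\nolimits^i H_\beta$ and $(R^if_*\uQQ_X)_x\cong \bigwedge\nolimits^i H_\alpha$. The stalk map $(\phi_i)_x$ is the pullback along $b|_{f^{-1}(x)}:T^{\dim\alpha}\twoheadrightarrow T^{\dim\beta}$, which by Lemma \ref{quot_descr} is the torus quotient induced by the linear inclusion $H_\beta\hookrightarrow H_\alpha$. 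Its effect on degree $i$ cohomology is the $i$-th exterior power of that inclusion, hence injective.

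The main obstacle is the stalkwise identification of $(\phi_i)_x$ with an exterior power of $H_\beta\hookrightarrow H_\alpha$, i.e.\ chasing the proper base change isomorphism through the constructions of Lemmas \ref{quot_descr} and \ref{lem_bam} to make sure the map really is induced by the canonical inclusion. Once that identification is in hand, injectivity of every $\phi_i$ is immediate, the connecting morphisms in the long exact sequence vanish, and the sequence decomposes into the stated short exact sequences.
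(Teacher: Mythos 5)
Your proposal matches the paper's own argument in both strategy and detail: apply $Rf^{\cup}_*$ to the distinguished triangle from Lemma \ref{lem_bam}(2), pass to cohomology sheaves, and kill the connecting maps by checking stalkwise (via Corollary \ref{fibre_cohom} and Lemma \ref{quot_descr}) that the map $R^if_*\underline{\QQ}_{X^\cup}\to R^if_*\underline{\QQ}_X$ is the $i$-th exterior power of the inclusion of the smaller hyperplane into the larger, hence injective. The paper phrases the last step through the splitting $H^*_\beta/\Gamma\cap H^*_\beta\cong\left(H^*_\alpha/\Gamma\cap H^*_\alpha\right)\times S^1$ rather than ``$\bigwedge^i$ of an inclusion,'' but these are the same observation, so there is no substantive difference.
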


\begin{proof}

Taking the derived pushforward to $P^\cup_\bullet$ of the distinguished triangle of Lemma \ref{lem_bam} ($2$), we obtain the distinguished triangle
$$Rf_*\underline{\QQ}_{X^\cup}\to Rf_*Rb_*\underline{\QQ}_{X^\cup} \cong Rf_*\underline{\QQ}_X\to Rf_*\underline{\QQ}_{X^\cap}[-1]\xrightarrow[]{+1},$$
 of which we can take the cohomology objects to obtain the following long exact sequence of sheaves on $P_\bullet$,

\begin{equation}\label{bdryarezero}
\dots\to R^if_*\underline{\QQ}_{X^\cup}\to R^if_*\underline{\QQ}_{X}\to R^{i-1}f_*\underline{\QQ}_{X^\cap}\to \dots
\end{equation}
We want to show that the boundary maps are zero or equivalently that the maps $b_i:R^if_*\underline{\QQ}_{X^\cup}\to R^if_*\underline{\QQ}_{X}$ are injective.  This follows directly from the construction as we will see by observing what is happening on the stalks. 

Let $x\in\alpha\in\mathring{\text{Sk}}_k(P_\bullet)$. The map $b_i$ descends from a balloon animal map and so is the identity away from the `new' cells in the decomposition. That is to say, if $x$ lies within the relative interior of a $k$-face in both $P_\bullet$ and $P_\bullet^\cup$, then $(b_i)_x:\bigwedge\nolimits^i H_\alpha \to \bigwedge\nolimits^i H_\alpha$ (cf. Definition \ref{Halpha}) is the identity.

If $\alpha$ does not lie within a $k$-face of $P_\bullet$, then we are looking at $(b_i)_x:\bigwedge\nolimits^i H_\alpha \to \bigwedge\nolimits^i H_\beta$ where $\beta$ is the $k+1$-face of $P_\bullet$ in which $\alpha$ lies. This map is really a map of cohomology groups (recall Corollary \ref{fibre_cohom}) induced by the map of tori $H^*_\beta/\Gamma\cap H_\beta^* \to H^*_\alpha/\Gamma\cap H^*_\alpha$. This map forms part of a split short exact sequence since $H_\alpha$ and $H_\beta$ are defined so that $H^*_\beta/\Gamma\cap H^*_\beta \cong \left(H^*_\alpha/\Gamma\cap H^*_\alpha\right)\times S^1$. Thus $(b_i)_x$ is injective and the boundary maps of the sequence in Equation \eqref{bdryarezero} are zero.

\end{proof}

\begin{theorem}\label{balloonpf}
Let $b:X\to X^{\cup}$ be a balloon animal map between broken toric varieties induced by a generic subdivision of the polytopal complex of $X$ and $X^{\cap}$ be the broken toric subvariety of $X^\cup$ associated to $P^{\cap}_\bullet$.  For $i\geq 0$ there exists a long exact sequence
\begin{equation*}
\begin{tikzpicture}[descr/.style={fill=white,inner sep=1.5pt}]
        \matrix (m) [
            matrix of math nodes,
            row sep=1em,
            column sep=2.5em,
            text height=1.5ex, text depth=0.25ex
        ]
        {  & &\cdots & H^{k-1}(P^\cap_\bullet, R^{i-1}f_*\underline{\QQ}_{X^\cap}) \\
            &H^{k}(P^\cup_\bullet, R^{i}f_*\underline{\QQ}_{X^\cup})
            &H^{k}(P_\bullet, R^{i}f_*\underline{\QQ}_{X})
            &H^{k}(P^\cap_\bullet, R^{i-1}f_*\underline{\QQ}_{X^\cap}) \\
            & H^{k+1}(P^\cup_\bullet, R^{i}f_*\underline{\QQ}_{X^\cup})& \cdots && \\
        };

        \path[overlay,->, font=\scriptsize,>=latex]
        (m-1-3) edge (m-1-4)
        (m-1-4) edge[out=355,in=175] (m-2-2)
        (m-2-2) edge  (m-2-3)
        (m-2-3) edge  (m-2-4)
        (m-2-4) edge[out=355,in=175] (m-3-2)
        (m-3-2) edge (m-3-3);
\end{tikzpicture}
\end{equation*}
\end{theorem}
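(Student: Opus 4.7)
The plan is to derive the advertised long exact sequence from the short exact sequence of sheaves produced by Lemma \ref{bampushforward}. First I would apply the derived global sections functor on $P^\cup_\bullet$ to the short exact sequence
$$0 \to R^if_*\underline{\QQ}_{X^\cup} \to R^if_*\underline{\QQ}_X \to R^{i-1}f_*\underline{\QQ}_{X^\cap} \to 0$$
of constructible sheaves on $P^\cup_\bullet$, producing the standard long exact sequence
$$\cdots \to H^k(P^\cup_\bullet, R^if_*\underline{\QQ}_{X^\cup}) \to H^k(P^\cup_\bullet, R^if_*\underline{\QQ}_X) \to H^k(P^\cup_\bullet, R^{i-1}f_*\underline{\QQ}_{X^\cap}) \to H^{k+1}(P^\cup_\bullet, R^if_*\underline{\QQ}_{X^\cup}) \to \cdots$$
in sheaf cohomology on $P^\cup_\bullet$.

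Second, I would identify the middle and right-hand terms with the cohomologies in the statement. For the middle term, the polytopal complex $P^\cup_\bullet$ subdivides $P_\bullet$ and thus shares its underlying topological space, while the factorization $f = f^\cup \circ b$ shows that the sheaf $R^if_*\underline{\QQ}_X$ on $P^\cup_\bullet$ appearing in Lemma \ref{bampushforward} agrees with $R^if_*\underline{\QQ}_X$ on $P_\bullet$ under the identification of underlying spaces. Since sheaf cohomology depends only on the underlying topological space, one obtains $H^k(P^\cup_\bullet, R^if_*\underline{\QQ}_X) \cong H^k(P_\bullet, R^if_*\underline{\QQ}_X)$. For the right-hand term, $P^\cap_\bullet$ is a closed subcomplex of $P^\cup_\bullet$ and $R^{i-1}f_*\underline{\QQ}_{X^\cap}$ is naturally supported on $P^\cap_\bullet$, extended by zero to $P^\cup_\bullet$. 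Pushforward along the closed inclusion $j : P^\cap_\bullet \hookrightarrow P^\cup_\bullet$ preserves sheaf cohomology, giving $H^k(P^\cup_\bullet, j_* R^{i-1}f_*\underline{\QQ}_{X^\cap}) \cong H^k(P^\cap_\bullet, R^{i-1}f_*\underline{\QQ}_{X^\cap})$.

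The main content of the theorem is entirely carried by Lemma \ref{bampushforward}; what remains is the translation into cohomology via the standard long exact sequence, combined with mild topological bookkeeping to place the terms on the appropriate polytopal complexes. I do not anticipate a significant obstacle here\textemdash only the need to state these identifications carefully, so that the resulting sequence genuinely matches the one displayed in the theorem.
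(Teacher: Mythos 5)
Your proposal is correct and takes essentially the same route as the paper: the theorem's proof is precisely the long exact sequence in sheaf cohomology associated to the short exact sequence of Lemma \ref{bampushforward}. The extra bookkeeping you supply (identifying the underlying topological spaces of $P_\bullet$ and $P^\cup_\bullet$ via $f = f^\cup \circ b$, and invoking pushforward along the closed inclusion $P^\cap_\bullet \hookrightarrow P^\cup_\bullet$) is exactly the implicit content that the paper leaves unstated.
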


\begin{proof}
This is the long exact sequence in cohomology associated to the short exact sequence in Lemma \ref{bampushforward}.
\end{proof}

\section{Using Cellular Sheaves}\label{section:cellular}

One way to approach the problem of calculating the cohomology of cell-compatible sheaves is to view them as cellular sheaves. Let us take a brief look at the definition and their cohomology, following \cite{Ghr14,Cur14}. First, just a bit of notation: for a polytopal complex $P_\bullet$, $\sigma\in\mathring{\text{Sk}}_k(P_\bullet)$, and $\tau\in\mathring{\text{Sk}}_{k+1}(P)$, let us write $\sigma\leq_1 \tau$ if $\sigma\in\mathring{\text{Sk}}_k(\tau)$.

\begin{definition}
A \emph{cellular sheaf} (of vector spaces) $\calF$ on a polytopal complex $P_\bullet$ is
\begin{enumerate}
\item the assignment of a vector space $\calF(\sigma)$ to each cell $\sigma$ of $P_\bullet$
\item a linear map $\rho_{\sigma,\tau}:\calF(\sigma)\to\calF(\tau)$ to every pair of  cells $\sigma\leq_1 \tau$. These \emph{restriction} maps are required to satisfy 
\begin{itemize}
\item $\rho_{\sigma,\sigma} = \text{id}_{\calF(\sigma)}$
\item If $\sigma\leq_1\tau\leq_1\kappa$, then $\rho_{\sigma,\kappa} = \rho_{\sigma,\tau} \circ \rho_{\tau,\kappa}$.
\end{itemize}
\end{enumerate}
\end{definition}

Now we can define the cohomology of a cellular sheaf. For a cellular sheaf $\calF$ on $P_\bullet$, define the $k$-th cochain space of $\calF$ to be 
$$C^k(P_\bullet,\calF) = \bigoplus_{\sigma\in\mathring{\text{Sk}}_k(P_\bullet)}\calF(\sigma).$$
To define coboundary maps, we first need to choose an orientation for each cell in our complex. Then for every pair of cells $\sigma\leq_1\tau$, we get a number $[\sigma:\tau]=\pm 1$, which is equal to $1$ if the orientations of $\sigma$ as a subcomplex of $\tau$ matches the orientation of $\sigma$ and $-1$ if not.  The $k$-th coboundary map $\delta^k: C^k(P_\bullet,\calF)\to C^{k+1}(P_\bullet,\calF)$ is then defined as the sum over $\sigma\in\mathring{\text{Sk}}_k(P_\bullet)$ of the maps $\delta_\sigma^k:\calF(\sigma)\subseteq C^k(P_\bullet,\calF)\to C^{k+1}(P_\bullet,\calF)$ defined by 
$$\delta_\sigma^k(v) = \sum_{\sigma\leq_1 \tau}[\sigma:\tau]\rho_{\sigma,\tau}(v).$$
Then the $k$-th cellular cohomology group of $\calF$ on $P_\bullet$ is 
$$H^k(P_\bullet,\calF) = \frac{\text{ker\,}\delta^k}{\text{im\,}\delta^{k-1}}.$$

This down-to-earth method is useful for calculation when the polytopal complexes involved are simple enough.

\begin{example}\label{3edgecellularcomp}
Let $\calF$ be the cellular sheaf on $T^2$ with cell complex structure as pictured here:

\begin{center}
 \begin{tikzpicture}
 
  \fill[gray!40!white] (0,0) -- (1,2) -- (5.12,2)-- (4.12,0) -- (0,0);
\draw[-] (0,0) -- (1,2) -- (5.12,2)-- (4.12,0) -- (0,0);
\draw (0.5,1) -- (2.06,0);
\draw (3.06,2) --(4.62,1);
\draw[->] (0,0) -- (1.8,0);
\draw[->] (1,2) -- (2.8,2);

\draw[->>] (0,0) -- (0.4,0.8);

\draw[->>] (4.12,0) -- (4.52,0.8);

\node (a) at (0,0){$\bullet$};
\node (b) at (1,2){$\bullet$};
\node (c) at (5.12,2){$\bullet$};
\node (d) at (4.12,0){$\bullet$};
\node (e) at (0.5,1){$\bullet$};
\node (f) at (2.06,0){$\bullet$};
\node (g) at (3.06,2){$\bullet$};
\node (h) at (4.62,1){$\bullet$};

\node (i) at (0.8,0.3){$\tau_1$};
\node (j) at (2.56,1){$\tau_2$};
\node (k) at (4.32,1.7){$\tau_3$};

\node (l) at (-0.1,0.5){$\sigma_3$};
\node (m) at (0.5,1.7){$\sigma_4$};

\node (n) at (0.9,-0.2){$\sigma_1$};
\node (o) at (3.1,-0.2){$\sigma_2$};

\node (p) at (1.5,0.6){$\sigma_5$};
\node (q) at (3.7,1.4){$\sigma_6$};

 \end{tikzpicture}
 \end{center}

Letting $(e_1,e_2)$ be the standard basis vectors, say that the $0$-cells are all assigned the zero vector space, $\sigma_1$ and $\sigma_2$ are assigned $\langle e_1\rangle$, $\sigma_3$ and $\sigma_4$ are assigned $\langle e_2\rangle$, $\sigma_5$ and $\sigma_6$ are assigned $\langle e_1-e_2\rangle$, $\tau_i$ are all assigned $\langle e_1, e_2\rangle$, and the restriction maps are all inclusions. Finally, let us orient $\sigma_1$ and $\sigma_2$ towards the right, $\sigma_3$ and $\sigma_4$ upwards, $\sigma_5$ and $\sigma_6$ pointing up and to the left, and each $2$-cell with the counterclockwise orientation. The cochain complex of this cellular sheaf is 
$$0\to\langle e_1 \rangle^{\oplus 2}\oplus \langle e_2 \rangle^{\oplus 2}\oplus \langle e_2-e_1 \rangle^{\oplus 2}\to \langle e_1,e_2 \rangle^{\oplus 3}\to 0.$$
So the only nontrivial coboundary morphism is $\delta^1$.
 We can calculate, for example, that 
 \begin{align*}
 \delta^k(1,0,0,0,0,0) &=\delta^k_{\sigma_1}(1)\\
 &=\sum_{\sigma_1\leq_1 \tau}[\sigma_1:\tau]\rho_{\sigma_1,\tau}(1)\\
 &=[\sigma_1,\tau_1]\rho_{\sigma_1,\tau_1}(1) + [\sigma_1,\tau_2]\rho_{\sigma_1,\tau_2}(1)
\\
&=(1,0,-1,0,0,0)\in \langle e_1, e_2\rangle^{\oplus 3}
 \end{align*}
 
 Altogether $\delta^k$ can be represented by the matrix
 
 \begin{center}
$ \begin{bmatrix}
    1 & 0  & 0  & 0  & -1 & 0 \\
    0 & 0  & 1  & 0  & 1 & 0\\
   -1 & 1  & 0  & 0  &  1  &-1\\
    0 & 0  & -1 & 1 &   -1& 1\\
    0 & -1 &  0 & 0 &  0& 1 \\
    0 & 0  &  0 & -1   & 0& -1
\end{bmatrix} $
\end{center}
which has rank $4$. Accordingly, the cellular cohomology groups of $\calF$ on $T^2$ are
\begin{equation*}
H^k(T^2,\calF) = \begin{cases}
\QQ^{\oplus 2}, &k=1,2\\
0, &\text{otherwise}
\end{cases}
\end{equation*}

\end{example}

One can also think of cellular sheaves on $P_\bullet$ as usual sheaves on $P_\bullet$ equipped with the \emph{Alexandrov topology} (see for example Section 4.2.1 of \cite{Cur14}). In this context, cellular sheaf cohomology is nothing but the \v{C}ech cohomology of the sheaf.

Cell-compatible sheaves have nice interpretations from this point of view\textemdash{}they can be identified with cellular sheaves where all the restriction maps are inclusions. To be more precise, we can define a cellular sheaf $\calF_{\text{cell}}$ from a cell-compatible sheaf $\calF_{\text{cc}}$ by setting $\calF_{\text{cell}}(\sigma) = \calF_{cc}|_x$ for any $x\in\sigma\in\mathring{\text{Sk}}_k$ and setting all the restriction maps to be inclusions. This construction is independent of $x$ since $\sigma$ is contractible. Moreover, the sheaf cohomology of the cell-compatible sheaf is the same as the sheaf cohomology of its associated sheaf in the Alexandrov topology. The sheaf cohomology of a cell-compatible sheaf then matches with the cellular cohomology of its associated cellular sheaf by Proposition A.15 of \cite{Rus22}.

Using these ideas, we can now provide the postponed proof to part ($1$) of Lemma \ref{lem_bam}.\\

\noindent\emph{Proof of Lemma \ref{lem_bam} ($1$).} First note that $R^1b_*\underline{\QQ}_{X}$ is supported exactly on $Y = f^{-1}(P^\cap_\bullet)$ since for any $x$ away from this locus, the fibre over $x$ is $0$-dimensional and we recall that by proper base change, $(R^1b_*\underline{\QQ}_{X})_x$ is the cohomology group of the fibre over $x$. For $y\in Y$,  the stalk of $R^1b_*\underline{\QQ}_{X}$ is $(R^1f_*\underline{\QQ}_X)_{f(y)} / (R^1f_*\underline{\QQ}_{X^\cup})_{f(y)}$ by the same logic.  In particular all the stalks are $1$-dimensional.  

Denote by $\calF$ and $\calG$ the associated cellular sheaves of $R^1b_*\underline{\QQ}_{X}$ and $\underline{\QQ}_{X^\cap}$ respectively. It is easier to check that we can define an isomorphism of sheaves here than in the classical case. The restriction maps of $\calG$ are the identity by definition and those of $\calF$ take the unique generator of $\calF(\sigma)$ to that of $\calF(\tau)$. This follows from the definition of $R^1b_*\underline{\QQ}_X$ as the sheafification of the presheaf which takes an open set $U$ to $H^1(b^{-1}(U),\underline{\QQ}_X)$. The map $\phi:\calF(\sigma)\to\calG(\sigma)$ which sends generator to generator is then seen to define a cellular sheaf morphism since it commutes with restriction. This morphism is an isomorphism of cellular sheaves since it is on each cell.

Since $\calF$ and $\calG$ are isomorphic, so are their associated sheaves $R^1b_*\underline{\QQ}_{X}$ and $\underline{\QQ}_{X^\cap}$.

\rightline{$\square$}

\section{Hypertoric Hitchin Systems}\label{hths}

In this section we will discuss hypertoric Hitchin systems, leading up to a description of how a particular type of broken toric varieties control their cohomology. Nothing in this section (save for Examples \ref{penult} and \ref{ult}) is novel, but is rather an attempt to present a somewhat contained version of the ideas, following \cite{HP06, MW18, DMS24}. 

\subsection{The Base Space}\label{tate}

Let us begin by defining the space we will be using as our basic building block.  For all $m\in\ZZ$, let $X_m=\{(x_m,y_m):x_m,y_m\in\CC\} \cong \CC^2$ and consider the isomorphism $$f_m:X_m\setminus\{x_m=0\} \to X_{m+1}\setminus\{y_{m+1}=0\}$$
given by
$$(x_m,y_m)\mapsto (x_m^2y_m,x_m^{-1}).$$
With these maps, we can define 
$$\tilde{\frakD} = \left(\bigsqcup_{m\in\ZZ}X_m\right) / \sim,$$ 
where the copies of $\CC^2$ are glued to each other by the maps $f_m$. Put differently, this gluing identifies $(x_m,y_m)$ and $(x_{m+1},y_{m+1})$ if $x_m=y_{m+1}^{-1}$ and $x_my_m=x_{m+1}y_{m+1}$. More generally,$(x_m,y_m)$ and $(x_{m+k},y_{m+k})$ are equivalent if $x_{m+k}=(x_my_m)^kx_m$ and $y_{m+k} = (x_my_m)^{-k}y_m$.  The resulting space is a complex surface which inherits a symplectic form by gluing together the pullbacks $f^*\omega_{m+1}=\omega_m$ of the standard forms $\omega_m = dx_m\wedge dy_m$ on $X_m$. 

Importantly, there is also a proper morphism $\tilde{h}:\tilde{\frakD}\to\CC$ given by $\overline{(x_m,y_m)}\mapsto x_my_m$, which is well-defined by our second description of the gluing. The generic fibre of $h$ is $\{\overline{(x_m,y_m)}\in\tilde{\frakD}: x_my_m=c\neq 0\}\cong \CC^*$ while the fibre over $0$ is an infinite chain of copies of $\PP^1$.  This can be seen by first considering the preimage of $0$ in $\bigsqcup_{m\in\ZZ}X_m$: The preimage of $0$ in each copy of $X_m$ is the union of $\CC=\{(x_m,0)\}$ and $\CC=\{(0,y_m)\}$ intersecting at the origin.  Our equivalence then glues $\{(x_m,0)\}$ to $\{(0,y_{m+1})\}$ via $x_m=y_{m+1}^{-1}$ for all $m\in\ZZ$, which yields an infinite number of copies of $\PP^1$, each one glued to the next at a single point.

\begin{center}
 \begin{tikzpicture}
 
 \fill[gray!40!white] (0,0) -- (1,2) -- (6,2)-- (5,0) -- (1,0);
  \node (a) at (-0.25,0) {$\CC$};
\draw[dashed] (0,0) -- (1,2);
\draw[dashed] (0,0) -- (5,0);
\draw[dashed] (1,2) -- (6,2);
\draw[dashed] (5,0) -- (6,2);
\node (z) at (-0.25,4.25) {$\tilde{\mathfrak{D}}$};
  \draw[->] (z) -- (a) node[pos=0.5, left]{$\tilde{h}$};
 %

   \node (g) at (3.25,0.8) {$0$};
   \node (b) at (3,1) {$\times$};
   \draw[thick, -, dotted] (b)--(3,2.5);
   
   \draw (3,5) circle (0.5);
    \draw (3,4) circle (0.5);
    
    \draw[dashed] (3.5,5) to [bend right] (2.5,5);
  \draw (3.5,5) to [bend left] (2.5,5);
  
  \draw[dashed] (3.5,4) to [bend right] (2.5,4);
  \draw (3.5,4) to [bend left] (2.5,4);
    
      \node (b) at (3,3.25) {$\vdots$};
        \node (b) at (3,6) {$\vdots$};
  %
  
     \node (c) at (1.5, 0.75) {$\times$};
   \draw[thick, -, dotted] (c)--(1.5, 3.5);
   \node (f) at (1.5, 3.75) {$\CC^*$};
  %
  
       \node (d) at (4.5, 0.75) {$\times$};
   \draw[thick, -, dotted] (d)--(4.5, 3.5);
     \node (e) at (4.5, 3.75) {$\CC^*$};

 \end{tikzpicture}
 \end{center}

There is a also a natural action of $\ZZ$ on $\tilde{\frakD}$ generated by the shift $X_m\to X_{m+1}$, $1:\overline{(x_m,y_m)}\mapsto \overline{(x_{m},y_{m})}$.  From this description, a point of $\tilde\frakD$ is fixed by $k$ if $\overline{(x_m,y_m)}= \overline{((x_my_m)^{k}x_m,(x_my_m)^{-k}y_m)}$, i.e. if it lies over a $k$-th root of unity.  In particular, this tells us that the action of $\ZZ$ on $\tilde{\frakD}$ is free as long as we restrict to points which are mapped by $\tilde{h}$ to the unit disc $\DD\subset\CC$. Accordingly, we define
$$\frakD = \tilde{h}^{-1}(\DD)/\ZZ.$$

It is from this space (the \emph{Tate curve}) that all other hypertoric Hitchin systems are defined.  Clearly $\tilde{h}$ descends to a proper map $h:\frakD\to\DD$. The generic fibre of $h$ is $\CC^*/\ZZ$, an elliptic curve, while the fibre over $0$ is the nodal elliptic curve, as illustrated by how $\ZZ$ acts on the chain of $\PP^1$'s: it maps one to the next, and so in the quotient they are all identified.

\begin{center}
 \begin{tikzpicture}
  \fill[gray!40!white] (3,1) ellipse (3cm and 0.7cm);
   \draw[dashed] (3,1) ellipse (3cm and 0.7cm);
  \node (a) at (-0.5,0.5) {$\DD$};
  \node (z) at (-0.5,4) {$\mathfrak{D}$};
  \draw[->] (z) -- (a) node[pos=0.5, left]{$h$};
 %

   \node (g) at (3.25,0.8) {$0$};
   \node (b) at (3,1) {$\times$};
   \draw[thick, -, dotted] (b)--(3,2.5);
    \draw (3,3.5) ellipse (0.5cm and 1cm);
 \draw (3.05,3) to [bend left] (3.05,4);
  \draw (3,3.1) to [bend right] (3.5,3.5);
   \draw (3,3.9) to [bend left] (3.5,3.5);
      \node (h) at (3.5,3.5) {$\bullet$};
  \draw[dashed] (2.9,3.5) to [bend right] (2.5,3.5);
  \draw (2.9,3.5) to [bend left] (2.5,3.5);
  %
  %
  
    \node (c) at (1.5, 0.75) {$\times$};
   \draw[thick, -, dotted] (c)--(1.5, 2.8);
      \draw (1.5,4) ellipse (0.5cm and 1cm);
    \draw (1.5,3.6) to [bend right] (1.5,4.4);
  \draw (1.55,3.5) to [bend left] (1.55,4.5);
    \draw (1,4) to [bend right] (1.4,4);
  \draw[dashed] (1,4) to [bend left] (1.4,4);
  %
  
       \node (d) at (4.5, 0.75) {$\times$};
   \draw[thick, -, dotted] (d)--(4.5, 2.8);
      \draw (4.5,4) ellipse (0.5cm and 1cm);
      \draw (4.5,3.6) to [bend right] (4.5,4.4);
  \draw (4.55,3.5) to [bend left] (4.55,4.5);
   \draw (4,4) to [bend right] (4.4,4);
  \draw[dashed] (4,4) to [bend left] (4.4,4);

 \end{tikzpicture}
 \end{center}

\subsection{A Quasi-Hyperk\"ahler Reduction}

As in the additive case, there is an action of $U(1)$ on $\tilde{\frakD}$ given by $\zeta \overline{(x,y)} = \overline{(\zeta x, \zeta^{-1}y)}$ with moment map $\tilde{\mu}:\tilde{\frakD}\to\RR$.  This action descends to an action of $U(1)$ on $\frakD$ with a group-valued moment map $\mu^{\frakD}_{U(1)}:\frakD\to\RR/\ZZ$.  

So, given a short exact sequence of tori $$1\to K\to T^n\to D\to 1,$$ the coordinate-wise action of $T^n$ on $\frakD^n$ induces an action of $K$ with a quasi-hyperk\"ahler moment map 
$$\mu:\frakD^n\to K^*\times \frakk^*_\CC.$$
That is, the action is Hamiltonian for two of the K\"ahler forms but only quasi-Hamiltonian with respect to the third.

There is a general theory of group-valued moment maps (see for example \cite{AMM98}), but it is enough for us to simply define a quasi-hyperk\"ahler reduction in effectively the same way as the hyperk\"ahler case. That is, the \emph{multiplicative hypertoric Dolbeault space} or \emph{hypertoric Hitchin system} associated to $K\hookrightarrow T^n$ and $\alpha\in K^*$ is defined to be 
$$\frakD_\alpha(K\hookrightarrow T^n) = \frakD^n \ssslash_{(\alpha,0)}K = \mu^{-1}(\alpha,0)/K.$$

\subsection{Cographical Hypertoric Hitchin Systems}
The hypertoric Hitchin systems we are interested in correspond to embeddings $K\hookrightarrow T^n$ which arise from graphs.  Such hypertoric Hitchin systems are called \emph{cographical}, and they include many interesting examples.  In particular, they are exactly the spaces that model neighbourhoods of certain fibres of the moduli space of Higgs bundles.

Let $\Gamma$ be a graph with no bridges\footnote{A \emph{bridge} is an edge in a connected graph $\Gamma$ whose removal disconnects $\Gamma$. Bridges in a graph $\Gamma$ do not contribute any information involved in the construction of $\frakD(\Gamma)$ and so can be safely ignored.  To be precise, if $e$ is a bridge of $\Gamma$, then $\frakD(\Gamma) = \frakD(\Gamma/e)$.} and choose an orientation.  This is required for the construction of the (co)homology groups of the graph, but all resulting spaces are independent of the orientation.

From such an oriented graph and a group $G$, let $V(\Gamma)$ and $E(\Gamma)$ denote the vertex and edge sets, respectively. Viewing these sets as the $0$- and $1$-simplices of the CW complex $\Gamma$, we obtain chain complexes
$$0\to C_1(\Gamma,G)=G^{E(\Gamma)}\xrightarrow{d_\Gamma} C_0(\Gamma,G)=G^{V(\Gamma)}\to 0$$
and 
$$0\to C^0(\Gamma,G)=G^{V(\Gamma)}\xrightarrow{d^*_\Gamma} C^1(\Gamma,G)=G^{E(\Gamma)}\to 0.$$

We sometimes denote the dimension of the first homology group of $\Gamma$ as $b_1(\Gamma)$, standing for the \emph{first Betti number} of $\Gamma$.

Taking (co)homology with $U(1)$ coefficients, we find dual short exact sequences of tori
$$1\to H_1(\Gamma,U(1))\to C_1(\Gamma,U(1))\to\text{im}d_\Gamma\to 1$$
and
$$1\to\text{im}d_\Gamma^*\to C^1(\Gamma,U(1))\to H^1(\Gamma,U(1))\to 1.$$

Using this second sequence (setting $K=\text{im}d_\Gamma^*$, etc.), we can define the hypertoric Hitchin system associated to the graph $\Gamma$ and $\alpha\in \text{im}d_\Gamma$ to be $$\frakD(\Gamma) = \frakD^{E(\Gamma)}\ssslash_{(\alpha,0)}\text{im}d_\Gamma^*.$$

Given $\Gamma$ and $A\in U(1)^{E(\Gamma)}$, set $S(A)$ to be the set of edges for which $A_e$ is zero. We say that $\alpha$ is \emph{generic} if for all $A\in d^{-1}_\Gamma(\alpha)$, the graph $\Gamma \setminus S(A)$ is connected. For two stability parameters generic in this sense, the spaces are diffeomorphic, which is why it has been dropped from the notation. From here on, hypertoric Hitchin system" will be taken to mean a cographical hypertoric Hitchin system with $\alpha$ chosen generically.

The moment map $\mu:\frakD^{E(\Gamma)}\to K^*\times \frakk^*_\CC \subset U(1)^{V(\Gamma)}\times \CC^{V(\Gamma)}$ can be described as a composition of the moment map on $\frakD$ with $d_\Gamma$:
$$\frakD^{E(\Gamma)}\xrightarrow{(\mu^{\frakD}_{U(1)}\times h)^{E(\Gamma)}}(U(1)\times \CC)^{E(\Gamma)}  = C_1(\Gamma,U(1)\times \CC)\xrightarrow{d_\Gamma} C_0(\Gamma,U(1)\times \CC) = (U(1)\times \CC)^{V(\Gamma)}.$$

From this, one can describe the inverse image of $(\alpha,0)$: a point $$(\overline{(x^1_m,y^1_m)}, \dots , \overline{(x^{|E(\Gamma)|}_m,y^{|E(\Gamma)|}_m)})\in\frakD^{E(\Gamma)}$$ lies in $\mu^{-1}(\alpha,0)$ if
$$\sum_{\text{edges }e\text{ entering }v}x^e_my^e_m - \sum_{{\text{edges }e\text{ exiting }v}}x^e_my^e_m =0$$
and
$$\prod_{{\text{edges }e\text{ entering }v}}\mu^{\frakD}_{U(1)}(x^e_m,y^e_m)\prod_{{\text{edges }e\text{ exiting }v}}\mu^{\frakD}_{U(1)}(x^e_m,y^e_m)^{-1} = \eta_v$$
for all vertices $v\in V(\Gamma)$.

\subsection{Properties}\label{props}

$\frakD(\Gamma)$ is a smooth manifold of complex dimension $2b_1(\Gamma)$. It comes equipped with a proper morphism $$h_\Gamma:\frakD(\Gamma)\to \DD^{b_1(\Gamma)},$$ the hypertoric analogue of the Hitchin map, which endows it with the structure of an integrable system. As with the usual Hitchin system, there is a deformation retract of $\frakD(\Gamma)$ to its central fibre.  In particular this implies that the inclusion $h_\Gamma^{-1}(0) \hookrightarrow \frakD(\Gamma)$ induces an isomorphism on cohomology. We denote the constant sheaf with rational coefficients on $\frakD(\Gamma)$ by $\underline{\QQ}_{\frakD(\Gamma)}$ and on $h^{-1}(0)_\Gamma$ by $\underline{\QQ}_\Gamma$.

As a seeming aside, let us describe how a graph $\Gamma$ gives rise to a periodic arrangement of hyperplanes in a vector space. There is a natural pairing
$$H^1(\Gamma,\RR)\times H_1(\Gamma,\RR)\to \RR$$
so the class $[e]\in H^1(\Gamma,\RR)$ of an edge $ e \in E(\Gamma)$ defines a linear form 
$$[e]:H_1(\Gamma,\RR)\to \RR.$$
The kernel of this map is a hyperplane $H_e$ in $H_1(\Gamma,\RR)$, so we can find an arrangement of hyperplanes by considering all $e\in E(\Gamma)$. We can pass to a periodic arrangement of hyperplanes in $H_1(\Gamma,\RR)$ by adding in the hyperplanes associated to $\langle e,\eta \rangle=n$ for $n\in\ZZ$, then to an arrangement of hyperplanes in $H_1(\Gamma,\RR) / H_1(\Gamma,\ZZ)$ by simply taking the quotient.

We would like this arrangement to have the property that any $k$ hyperplanes intersect in dimension $k$. This can be ensured by defining the hyperplanes not by $\langle e,\eta \rangle=0$ but by a ``shifted" condition depending on a generic $\alpha\in \text{im}d_\Gamma$. Let $\alpha^*\in K \subset C^0(\Gamma,U(1))$ be the dual of $\alpha$ and let $\tilde{\alpha} = (\tilde{\alpha}_{e_1},\dots,\tilde{\alpha}_{e_{E(\Gamma)}})$ be the image of $\alpha^*$ under $d_\Gamma^*$. Finally, we can define the hyperplane $h_e$ in $H_1(\Gamma,\RR)$ associated to $e$ by $\langle e,\eta \rangle=\tilde{\alpha}_e$.  In other words, we have associated to each edge of $\Gamma$ an element of $U(1)$ and then taken each central hyperplane $H_e$, shifted it by that element, and called the resulting hyperplane $h_e$. See the next section for some examples.

The relevance of this construction is evident from the following theorem of McBreen--Webster.

\begin{theorem}(Proposition 4.9 of \cite{MW18})
The central fibre $h^{-1}_\Gamma(0)$ of $\frakD(\Gamma)$ is a broken toric variety whose polytopal complex has the topological type of real torus of dimension $b_1(\Gamma)$.  Further, the polytopal decomposition on the torus which gives the polytopal complex of $h^{-1}_\Gamma(0)$ is described by a periodic hyperplane arrangement of $\Gamma$.
\end{theorem}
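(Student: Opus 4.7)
The plan is to work concretely with the defining realization of $\frakD(\Gamma)$ as a quasi-hyperk\"ahler reduction, and then to track what happens to the polytopal complex of the central fibre under this reduction. Since $h_\Gamma$ is the map induced on $\frakD(\Gamma) = \mu^{-1}(\alpha,0)/K$ (with $K = \mbox{im}\, d^*_\Gamma$) by the projection $h^{E(\Gamma)} : \frakD^{E(\Gamma)} \to \CC^{E(\Gamma)}$ followed by the quotient $\CC^{E(\Gamma)} \twoheadrightarrow H_1(\Gamma,\CC)$, and since the vanishing of this projection combined with the complex moment-map condition $d_\Gamma \circ h^{E(\Gamma)} = 0$ forces $h^{E(\Gamma)} = 0$ outright, one obtains $h_\Gamma^{-1}(0) = \left( (h^{-1}(0))^{E(\Gamma)} \cap \mu^{-1}_{U(1)}(\alpha) \right)/K$.

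First I would identify $(h^{-1}(0))^{E(\Gamma)}$ as a broken toric variety. From the explicit description of the Tate curve in Subsection \ref{tate}, the fibre $h^{-1}(0) \subset \frakD$ is a single $\PP^1$ with its two torus-fixed points glued together, which is a broken toric curve whose polytopal complex is the circle $\RR/\ZZ$. Taking the product over $E(\Gamma)$ therefore yields a broken toric variety of dimension $|E(\Gamma)|$ on which the compact torus $T^{E(\Gamma)}$ acts coordinate-wise, with polytopal complex the flat torus $T^{E(\Gamma)} = \RR^{E(\Gamma)}/\ZZ^{E(\Gamma)}$ cell-decomposed by the integer translates of the coordinate hyperplanes.

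Next I would perform the reduction by $K$ at the level of polytopal complexes. Dualising the short exact sequence $1 \to K \to T^{E(\Gamma)} \to T^{H^1(\Gamma)} \to 1$ shows that the real moment-map slice at the generic value $\alpha$ is an affine translate of $H_1(\Gamma,\RR) \subset \RR^{E(\Gamma)}$, and that quotienting this slice by $K$ produces precisely the real torus $H_1(\Gamma,\RR)/H_1(\Gamma,\ZZ)$ of dimension $b_1(\Gamma)$, establishing the claimed topological type. Each coordinate hyperplane $\{x_e \in \ZZ\}$ in $T^{E(\Gamma)}$ descends, under this slice-and-quotient, to the affine hyperplane $\{\langle e,\eta\rangle = \tilde\alpha_e \bmod \ZZ\}$, recovering exactly the periodic hyperplane arrangement of $\Gamma$ from Subsection \ref{props}.

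The main obstacle I anticipate is verifying rigorously that the quasi-hyperk\"ahler reduction really preserves the broken toric structure in the sense of Definition \ref{btvdef}, namely that the quotient continues to split as a union of smooth projective toric components glued along torus-invariant toric subvarieties, rather than introducing singularities or non-toric pieces. This should follow cell-by-cell from the Delzant recipe invoked in Lemma \ref{quot_descr}: genericity of $\alpha$ forces any $k$ of the shifted hyperplanes to meet transversely, so each top-dimensional cell of the reduced polytopal complex is a simple polytope whose associated toric variety is smooth projective, and the gluings along lower-dimensional cells are inherited naturally from the product structure upstairs.
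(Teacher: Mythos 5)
Your argument is correct in substance and is close in spirit to the paper's sketch, but reorders the two quotients. The paper performs the symplectic reduction by $K$ first, on the \emph{universal-cover} fibre $\tilde{h}^{-1}(0)^{|E(\Gamma)|}$ (a periodic broken toric variety whose polytopal complex is $\RR^{E(\Gamma)}$ cut by the integer coordinate hyperplanes), obtaining a periodic complex of topological type $\RR^{b_1(\Gamma)}$, and only then quotients by $\ZZ^{b_1(\Gamma)}$. You instead begin with the already-compact product $(h^{-1}(0))^{E(\Gamma)}$, whose polytopal complex is $T^{E(\Gamma)}$ with its coordinate-hyperplane decomposition, and reduce there using the group-valued moment map. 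Your order actually mirrors the construction of $\frakD(\Gamma)$ itself (the $\ZZ$-quotient is built into $\frakD$ before $K$ acts on $\frakD^{E(\Gamma)}$), so it sidesteps the need to check that reduction and the $\ZZ$-quotient commute.

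Two precision points. The complex moment-map condition says $h^{E(\Gamma)}$ takes values in the \emph{subspace} $\ker d_\Gamma = H_1(\Gamma,\CC)\subset\CC^{E(\Gamma)}$; calling the map to $H_1(\Gamma,\CC)$ a quotient $\CC^{E(\Gamma)}\twoheadrightarrow H_1(\Gamma,\CC)$ is backwards, and with the correct reading the equivalence $h_\Gamma=0\Leftrightarrow h^{E(\Gamma)}=0$ on the slice is immediate. Second, once you commit to the group-valued moment map $d_\Gamma\circ f$ landing in $(\RR/\ZZ)^{V(\Gamma)}$, the slice $d_\Gamma^{-1}(\alpha)\subset T^{E(\Gamma)}$ is \emph{already} an affine translate of $H_1(\Gamma,\RR/\ZZ)\cong H_1(\Gamma,\RR)/H_1(\Gamma,\ZZ)\cong T^{b_1(\Gamma)}$; there is no further quotient of the slice by $K$ at the polytopal level, since $K$ acts along the fibres of the moment-polytope map and leaves the base unchanged. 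Your phrase about ``quotienting this slice by $K$'' to obtain $H_1(\Gamma,\RR)/H_1(\Gamma,\ZZ)$ conflates the universal-cover picture (slice in $\RR^{E(\Gamma)}$, subsequent lattice quotient) with the compact picture (slice already a torus). Either framing works, but mixing them obscures the argument. With these fixes, the rest of your reasoning, including transversality from genericity of $\alpha$ and recovery of the shifted arrangement $\{\langle e,\eta\rangle=\tilde\alpha_e\}$, gives the statement.
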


The idea of the proof is to start with the observation that $\tilde{h}_\Gamma^{-1}(0)$ is a symplectic reduction of $\tilde{h}^{-1}(0)^{|E(\Gamma)|}$, which is an infinite grid of copies of $\prod_{e\in E(\Gamma)}\PP^1$.  The reduction is then a quotient of each component of $\tilde{h}^{-1}(0)$ which lies in the fibre of the moment map over $\alpha$.  The quotient of a toric variety by a torus action is toric, so the result is a periodic, non-compact broken toric variety whose polytopal complex is also periodic, with the topological type of $\RR^{b_1(\Gamma)}$.   Taking the quotient by $\ZZ^{b_1(\Gamma)}$ then recovers $h^{-1}_\Gamma(0)$.

As long as the shifts $\tilde{\alpha}_e$ defining the hyperplane arrangement are rational, the complex can be scaled so that the polytopes appearing are lattice polytopes.The hyperplane arrangement thus makes sense as the polytopal complex of a broken toric variety.

\subsection{Examples}\label{exies}

\begin{example}
Let $\Gamma = \loopquiver$. According to the construction of the previous section, we define 
$$\frakD(\Gamma) = \frakD \ssslash_{(\alpha,0)}\text{im}d_\Gamma^*.$$
In this case, $d^*_\Gamma$ is the zero map and every $(x,y)\in\frakD$ lies in $\mu^{-1}(\alpha,0)$. Thus $\frakD(\Gamma) = \frakD$.

We can also verify that the central fibre of $\frakD$ is what we expect it to be. Recall that by construction, $h^{-1}_\Gamma(0)$ should be a pinched torus. On the other hand, the periodic hyperplane arrangement associated to $\Gamma = \loopquiver$ is $\ZZ\subset \RR$, the quotient of which by $H^1(\Gamma, \ZZ) = \ZZ$ gives the hyperplane arrangement of a single point on $S^1$.  The fibre of the map $f:h^{-1}_\Gamma(0)\to S^1$ is a copy of $S^1$ away from this hyperplane and a point over the hyperplane.  
\vspace{-25mm}
\begin{center}
 \begin{tikzpicture}
  \draw (0,0) ellipse (3cm and 0.5cm);
%

\node(a) at (3.1,3) {};
\node (b) at (-3,3) {};
\draw[] (a) to [out=165,in=195, looseness=250] (a);
\draw [] (a) to [out=150,in=210, looseness=150](a);

   \draw[dashed] (-1,3.87) ellipse (0.16cm and 0.39cm);
     \node (b) at (-1,0.46) {$\times$};
   \draw[dotted] (b)--(-1,1.6);
     \draw[dotted] (-1,2.6)--(-1,3.46);
  %
  
   \draw[dashed] (0,2.115) ellipse (0.13cm and 0.37cm);
     \node (c) at (0,-0.5) {$\times$};
   \draw[dotted] (c)--(0,1.7);
  %
  
     \node (d) at (3,0) {$\times$};
   \draw[dotted] (d)--(3,3);
   \node at (3,3) {$\scriptstyle\bullet$};
  \node (x) at (-3.8,3) {$h_\Gamma^{-1}(0)$};
   \node (y) at (-3.8,0) {$S^1$};
  \draw[->] (x) -- (y) node[pos=0.5, left]{$f_\Gamma$};

 \end{tikzpicture}
 \end{center}
 
\end{example}

\begin{example}
Let $\Gamma = \triquiver$.  Then
$$\frakD(\Gamma) = \frakD^3 \ssslash_{(\alpha,0)}\text{im}d_\Gamma^*.$$
This a $4$-complex-dimensional space equipped with a proper map $h_\Gamma:\frakD(\Gamma)\to\DD^2$ whose generic fibre is $T^4$. For this example, let us go carefully through the construction of the periodic hyperplane arrangement of $\Gamma$ describing the structure of the central fibre. We need to choose an orientation for the graph:
\begin{center}
 \begin{tikzpicture}
\node(a) at (0,-0.04) {$\bullet$};
\node(b) at (2.1,-0.04) {$\bullet$};

\draw[->] (0,0) -- (1.9,0) node[pos=0.5, above]{$e_2$};
\draw[->] (0,0) to [out = 50, in =130] (2,0.1) {};
\draw[->] (0,0) to [out = 310, in = 230] (2,-0.1) {};

\node(c) at (1.2,0.7) {$e_1$};
\node(d) at (1.2,-0.7) {$e_3$};
\end{tikzpicture}
\end{center}

We can then write $H_1(\Gamma,\RR) = \langle [e_1-e_2], [e_2-e_3]\rangle$ and we have linear forms $[e_i]:H_1(\Gamma,\RR)\to \RR$. Given $a,b\in\RR$, these forms look like
$$[e_1](a[e_1-e_2], b[e_2-e_3]) = a$$
$$[e_2](a[e_1-e_2], b[e_2-e_3]) = -a+b$$
$$[e_3](a[e_1-e_2], b[e_2-e_3]) = b$$
and thus the arrangement of central hyperplanes they define in $H_1(\Gamma,\RR)$ is given by the $x$ and $y$ axes, along with the line $-x+y=0$.  However, this arrangement is not stable (there are $3$ hyperplanes meeting at $0$) so we shift the hyperplane $H_{e_2}$ by some $\tilde{\alpha}_{e_2}\in S^1$. We then pass to a periodic arrangement and then to an arrangement on $T^2 = H_1(\Gamma,\RR)/H_1(\Gamma,\ZZ)$. This is the base in the following picture, which also visualizes some of the fibres of the map $f:h^{-1}_\Gamma(0)\to T^2$.

\begin{center}
 \begin{tikzpicture}
 
  \fill[gray!40!white] (0,0) -- (1,2) -- (5.12,2)-- (4.12,0) -- (0,0);
\draw[-] (0,0) -- (1,2) -- (5.12,2)-- (4.12,0) -- (0,0);
\draw (0.5,1) -- (2.06,0);
\draw (3.06,2) --(4.62,1);
\draw[->] (0,0) -- (1.8,0);
\draw[->] (1,2) -- (2.8,2);

\draw[->>] (0,0) -- (0.4,0.8);

\draw[->>] (4.12,0) -- (4.52,0.8);

 %

  \draw (1.5,5) ellipse (0.7cm and 2cm);
  \draw (1.5,4.4) to [bend right] (1.5,5.6);
  \draw (1.55,4.3) to [bend left] (1.55,5.7);
   \node (a) at (1.5,1) {$\times$};
   \draw[dotted] (a)--(1.5,3);
  %
  
   \draw (3.3,4.5) ellipse (0.5cm and 1cm);
     \node (b) at (3.3,0) {$\times$};
   \draw[dotted] (b)--(3.3,3.5);
  %
  
     \node (c) at (4.62,1) {$\times$};
   \draw[dotted] (c)--(4.62,4);
   \node at (4.62,4.2) {$\scriptstyle\bullet$};

   \node (x) at (-0.3,5) {$h_\Gamma^{-1}(0)$};
   \node (y) at (-0.3,1) {$T^2$};
  \draw[->] (x) -- (y) node[pos=0.5, left]{$f_\Gamma$};
 \end{tikzpicture}
 \end{center}

The irreducible components of the broken toric variety $h^{-1}_\Gamma(0)$ are two copies of $\PP^2$ (the toric varieties of the two triangular cells in the arrangement) along with a copy of $\PP^1\times\PP^1$ which is blown up at two points (the toric variety of the hexagon).
\end{example}

\begin{example}\label{penult}
Let $\Gamma = \newquiver$. This example is nearly the same as the previous, but with the edge $e_1$ divided into two edges ($e_1'$ and $e_1''$).  They both define the same central hyperplane, so one must be shifted in order to get a stable arrangement.

\begin{center}
 \begin{tikzpicture}
 
  \fill[gray!40!white] (0,0) -- (1,2) -- (5.12,2)-- (4.12,0) -- (0,0);
\draw[-] (0,0) -- (1,2) -- (5.12,2)-- (4.12,0) -- (0,0);
\draw (0.5,1) -- (2.06,0);
\draw (0.5,0) -- (1.5,2);
\draw (3.06,2) --(4.62,1);
\draw[->] (0,0) -- (1.8,0);
\draw[->] (1,2) -- (2.8,2);

\draw[->>] (0,0) -- (0.4,0.8);

\draw[->>] (4.12,0) -- (4.52,0.8);

 %

  \draw (1.5,5) ellipse (0.7cm and 2cm);
  \draw (1.5,4.4) to [bend right] (1.5,5.6);
  \draw (1.55,4.3) to [bend left] (1.55,5.7);
   \node (a) at (1.5,1) {$\times$};
   \draw[dotted] (a)--(1.5,3);
  %
  
   \draw (3.3,4.5) ellipse (0.5cm and 1cm);
     \node (b) at (3.3,0) {$\times$};
   \draw[dotted] (b)--(3.3,3.5);
  %
  
     \node (c) at (4.62,1) {$\times$};
   \draw[dotted] (c)--(4.62,4);
   \node at (4.62,4.2) {$\scriptstyle\bullet$};

\node (x) at (-0.3,5) {$h_\Gamma^{-1}(0)$};
   \node (y) at (-0.3,1) {$T^2$};
  \draw[->] (x) -- (y) node[pos=0.5, left]{$f_\Gamma$};
 \end{tikzpicture}
 \end{center}
\end{example}

\begin{example}\label{ult}
Let $\Gamma = \fourquiver$. This gives a higher-dimensional example since $b_1(\Gamma) = 3$, the central hyperplanes of which are given by the planes defined by $x=0$, $y=0$, $z=0$, and $x+y+z=0$.  Pictured below is a cube representing $T^3$ with the faces of the cube and the interior diagonal plane representing the shifted periodic hyperplane arrangement. Analogously to the previous examples, the fibre of $f_\Gamma$ over a point in $\mathring{\text{Sk}}_k$ is $T^k$. Further, it appears that the broken toric variety $h_\Gamma^{-1}(0)$ has four irreducible components (see Lemma \ref{tree}).
\vspace*{3pt}
\begin{center}
 \begin{tikzpicture}

\fill[gray!40!white] (0,0) -- (0,4)--(1,6)--(5,6)--(5,2)--(4,0)--(0,0);

 \node (a) at (0,0) {};
  \node (b) at (1,2) {};
   \node (c) at (5,2) {};
    \node (d) at (4,0) {};
     \node (e) at (0.5,1) {};
      \node (f) at (2,0) {};
       \node (g) at (3,2) {};
        \node (h) at (4.5,1) {};
 \node (i) at (0,4) {};
  \node (j) at (1,6) {};
   \node (k) at (5,6) {};
    \node (l) at (4,4) {};
     \node (m) at (0.5,5) {};
      \node (n) at (2,4) {};
       \node (o) at (3,6) {};
        \node (p) at (4.5,5) {};
        
 \node (q) at (0,2) {};
  \node (r) at (1,4) {};
   \node (s) at (5,4) {};
    \node (t) at (4,2) {};
    
\fill[gray] (0,2)--(0.5,1)--(2,0)--(0,2);
\fill[gray] (3,6)--(5,4)--(4.5,5)--(3,6);
\fill[gray] (0.5,5)--(1,4)--(3,2)--(4.5,1)--(4,2)--(2,4)--(0.5,5);

 \draw[-] (0,0) -- (0,4)--(1,6)--(5,6)--(5,2)--(4,0)--(0,0);
 \draw[-] (0,4) -- (4,4) -- (5,6);
 \draw[-] (4,4) -- (4,0);
  \draw[-] (0,2) -- (2,0);
   \draw[dashed] (0,2) -- (0.5,1)--(2,0);
    \draw[-] (0.5,5) -- (2,4) -- (4,2) -- (4.5,1);
     \draw[-] (3,6) -- (4.5,5) -- (5,4);
      \draw[dashed] (0,0) -- (1,2) -- (1,6);
       \draw[dashed] (1,2) -- (5,2);
        \draw[dashed] (0.5,5) -- (1,4)--(3,2)--(4.5,1);
         \draw[dashed] (3,6) -- (5,4);

 \end{tikzpicture}
 \end{center}
\end{example}

\section{Deletion-Contraction}\label{sectiondelcon}

Deletion-contraction relations are often useful when graphs are concerned, as evidenced for example by the number of spanning trees of a graph or its chromatic polynomial (both of which are really specializations of the Tutte polynomial of a graph, which is universal among multiplicative graph invariants that exhibit deletion-contraction relationships).  Hypertoric Hitchin systems are no exception to this rule. Recall that for a graph $\Gamma$ and a non-loop edge $e$, the \emph{deletion} $\Gamma\setminus e$ is the graph which has the same vertices of $\Gamma$ and all the same edges, sans $e$. The \emph{contraction} $\Gamma/e$ is the graph obtained by replacing the two vertices $\{v_1,v_2\}$ to which $e$ is incident with a single vertex, with any edge that was incident to $v_1$ or $v_2$ now incident to the new vertex (and so an edge other than $e$ incident to both $v_1$ and $v_2$ becomes a loop in the contraction graph). 

\begin{example}\label{simpledelcon}
Let $\Gamma = \newquiver$ and $e$ be one of the edges adjacent to the unique vertex of degree two. Then $\Gamma/e = \triquiver$ and $\Gamma\setminus e = \diquiverplus$. 
\end{example}

\begin{example}
Let $\Gamma$ be a connected graph, $e$ a non-loop edge of $\Gamma$, and $t(\Gamma)$ be the number of spanning trees of $\Gamma$. Then
$$t(\Gamma) = t(\Gamma\setminus e)+t(\Gamma/e).$$
This is a prototypical example of a deletion-contraction relationship.
\end{example}

Now we can express some of our earlier machinery involving broken toric varieties in terms of deletion-contraction. 

\begin{proposition}\label{delconballoon}
Let $\Gamma$ be a graph and $e$ be a non-loop edge of $\Gamma$. Associated to the contraction $\Gamma\to\Gamma/e$ is a balloon animal map (\emph{cf.} Section \ref{section:bam}) of broken toric varieties $h^{-1}_{\frakD(\Gamma/e)}(0)\to h^{-1}_{\frakD(\Gamma)}(0)$ for which the broken toric variety $X^\cap$ associated to the intersection of the polytopes in the subdivision is $h^{-1}_{\frakD(\Gamma\setminus e)}(0)$.
\end{proposition}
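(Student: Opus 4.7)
The plan is to work entirely at the level of the periodic hyperplane arrangements from Subsection \ref{props}, using the fact that all three central fibres are described by such arrangements inside tori of the appropriate dimensions. For a non-loop edge $e$, lifting cycles along the quotient map $\Gamma \to \Gamma/e$ induces a natural isomorphism $H_1(\Gamma/e, \RR) \xrightarrow{\sim} H_1(\Gamma, \RR)$ (and similarly with $\ZZ$-coefficients). Under this identification, for every edge $e' \neq e$ the linear form $[e']$ on $H_1(\Gamma/e, \RR)$ coincides with $[e']$ on $H_1(\Gamma, \RR)$, so the periodic hyperplane arrangement of $\Gamma$ on the $b_1(\Gamma)$-dimensional torus is obtained from that of $\Gamma/e$ by adding the shifted hyperplane $h_e$ together with all its integer translates. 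Hence the polytopal complex $P_\bullet^\cup$ of $h_{\frakD(\Gamma)}^{-1}(0)$ is literally a subdivision of the polytopal complex $P_\bullet$ of $h_{\frakD(\Gamma/e)}^{-1}(0)$.

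Next I would verify that this subdivision is \emph{generic}. The faces of $P_\bullet^\cup$ not appearing in $P_\bullet$ are exactly the faces cut out on the translates of $h_e$ by the other hyperplanes. Their relative boundaries lie in intersections of $h_e$ with other hyperplanes $h_{e'}$, which by the genericity of the chosen $\alpha$ (no unnecessary coincidences among hyperplanes) meet $h_e$ transversally and avoid higher codimension coincidences; each such boundary cell therefore lies in the open $k$-skeleton of $P_\bullet$, as required by the definition of a generic subdivision in Section \ref{section:bam}.

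Then I would identify $X^\cap$ with $h_{\frakD(\Gamma\setminus e)}^{-1}(0)$. The inclusion $\Gamma\setminus e \hookrightarrow \Gamma$ induces an inclusion $H_1(\Gamma\setminus e, \RR) \hookrightarrow H_1(\Gamma, \RR)$ whose image is exactly the central hyperplane $H_e = \ker([e])$, and analogously for $\ZZ$-coefficients so that $H_1(\Gamma\setminus e, \ZZ) = H_e \cap H_1(\Gamma, \ZZ)$. Under this identification the hyperplane arrangement of $\Gamma\setminus e$ is the restriction to $H_e$ of the arrangement of $\Gamma$ consisting of the hyperplanes $H_{e'}$ with $e' \neq e$. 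After taking the periodic arrangement and quotienting, the polytopal complex of $h_{\frakD(\Gamma\setminus e)}^{-1}(0)$ sits inside $T^{b_1(\Gamma)}$ as the union of translates of $h_e$ equipped with the induced subdivision, which is exactly the definition of $P_\bullet^\cap$.

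Finally, to promote the polytopal correspondence into a genuine balloon animal map of broken toric varieties in the sense of Definition \ref{bam_defn}, I would argue that all three central fibres $h_{\frakD(\Gamma)}^{-1}(0)$, $h_{\frakD(\Gamma/e)}^{-1}(0)$, $h_{\frakD(\Gamma\setminus e)}^{-1}(0)$ are obtained from the same quasi-hyperk\"ahler reduction procedure and therefore inherit compatible $T^n$-torsor data; the map is then literally the map induced by refining the equivalence relation in Lemma \ref{quot_descr}, matching the construction in Definition \ref{bam_defn}. The main obstacle I foresee is not any one of these steps in isolation but ensuring that the gluing torsors on the three polytopal complexes are compatibly identified under the natural isomorphisms above, so that the balloon animal map is well-defined as a map of broken toric varieties rather than merely as a continuous map realizing a subdivision; this should follow from the naturality of the McBreen--Webster description (Proposition 4.9 of \cite{MW18}) with respect to edge contractions and deletions, but warrants careful bookkeeping.
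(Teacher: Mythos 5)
Your proof is correct and follows essentially the same route as the paper's: you work at the level of periodic hyperplane arrangements, observe that contracting a non-loop edge $e$ identifies $H_1(\Gamma/e,\RR)\cong H_1(\Gamma,\RR)$ so the arrangement of $\Gamma$ is that of $\Gamma/e$ plus the single hyperplane $h_e$ (and its integer translates), use genericity of $\alpha$ to conclude the subdivision is generic, and identify the arrangement induced on $h_e$ with that of $\Gamma\setminus e$ via $H_1(\Gamma\setminus e,\RR)=\ker([e])\subset H_1(\Gamma,\RR)$. The paper condenses this into a two-paragraph proof with exactly the same content. The one point where you go beyond the paper is your final paragraph on the compatibility of the gluing torsors: this is a legitimate subtlety, but the paper sidesteps it by its standing convention (stated after Lemma \ref{torsor_action}) that the gluing data is assumed trivial unless a torsor is explicitly specified, and by the fact that cohomology is invariant under the torsor action; so no extra naturality argument about the McBreen--Webster construction is actually required for the downstream cohomological applications, though your instinct to flag it is sound.
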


\begin{proof}
The passage from $\Gamma/e$ to $\Gamma$ at the level of the periodic hyperplane arrangements describing the varieties amounts to the addition of a single hyperplane to the arrangement. Since the hyperplane arrangement is stable, the subdivision of polytopal complexes that it induces is generic and so one can define the balloon animal map between the two broken toric varieties.

The hyperplane arrangement on the hyperplane corresponding to $e$ is equivalently either the polytopal complex $P_\bullet^\cap$ associated to the intersection of the polytopes in the subdivision or the hyperplane arrangement induced by the inclusion of $H_1(\Gamma\setminus e,\RR)\subset H_1(\Gamma,\RR)$. The latter is precisely the hyperplane arrangement associated to $\Gamma\setminus e$ by definition: For any non-bridge edge $e'\in\Gamma\setminus e$, the kernel of $[e']:H_1(\Gamma\setminus e,\RR)\to\mathbb{R}$ is the intersection of the kernels of $[e]:H_1(\Gamma,\RR)\to\mathbb{R}$ and $[e']:H_1(\Gamma,\RR)\to\mathbb{R}$.

\end{proof}

\begin{example}
Let $\Gamma$ and $e$ be as in Example \ref{simpledelcon}. Pictured below are the polytopal complexes of the central fibres of the hypertoric Hitchin systems $\frakD(\Gamma)$ and $\frakD(\Gamma/e)$. The polytopal complex of the central fibre of $\frakD(\Gamma\setminus e)$ is the circle with two marked points.

\begin{center}
 \begin{tikzpicture}
  \fill[gray!40!white] (0,0) -- (1,2) -- (5.12,2)-- (4.12,0) -- (0,0);
\draw[-] (0,0) -- (1,2) -- (5.12,2)-- (4.12,0) -- (0,0);
\draw (0.5,1) -- (2.06,0);
\draw (0.5,0) -- (1.5,2);
\draw (3.06,2) --(4.62,1);
\draw[->] (0,0) -- (1.8,0);
\draw[->] (1,2) -- (2.8,2);

\draw[->>] (0,0) -- (0.4,0.8);

\draw[->>] (4.12,0) -- (4.52,0.8);

\node (a) at (2.56,-0.5){$P_\bullet(h^{-1}_{\Gamma}(0))$};

  \fill[gray!40!white] (6,0) -- (7,2) -- (11.12,2)-- (10.12,0) -- (6,0);
\draw[-] (6,0) -- (7,2) -- (11.12,2)-- (10.12,0) -- (6,0);
\draw (6.5,1) -- (8.06,0);
\draw (9.06,2) --(10.62,1);
\draw[->] (6,0) -- (7.8,0);
\draw[->] (7,2) -- (8.8,2);

\draw[->>] (6,0) -- (6.4,0.8);

\draw[->>] (10.12,0) -- (10.52,0.8);

\node (b) at (8.56,-0.5){$P_\bullet(h^{-1}_{\Gamma/e}(0))$};
\end{tikzpicture}
\end{center}
\end{example}

The next theorem is originally due to Dansco--McBreen--Shende. Our investigation of balloon animal maps provides an alternative proof.

\begin{theorem}(Theorem 6.43 of \cite{DMS24})\label{thm_delcon}
Let $\Gamma$ be a graph and $e$ be an edge of $\Gamma$ which is not a loop. Then there is a long exact sequence relating the cohomologies of hypertoric Hitchin systems:
\begin{equation*}
\dots\to H^{k}(\frakD(\Gamma),\underline{\QQ}_{\frakD(\Gamma)})\to H^k(\frakD(\Gamma/e),\underline{\QQ}_{\frakD(\Gamma/e)})\to H^{k-1}(\frakD(\Gamma \setminus e),\underline{\QQ}_{\frakD(\Gamma \setminus e)})\to\dots
\end{equation*}
\end{theorem}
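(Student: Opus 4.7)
The plan is to deduce the theorem directly from the balloon animal machinery by combining Proposition~\ref{delconballoon}, Theorem~\ref{bigballoonpf}, and the deformation retract from each hypertoric Hitchin system to its central fibre discussed in Subsection~\ref{props}.

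First I would invoke Proposition~\ref{delconballoon}, which produces a balloon animal map
$$b\colon h^{-1}_{\frakD(\Gamma/e)}(0)\longrightarrow h^{-1}_{\frakD(\Gamma)}(0)$$
induced by the generic subdivision of polytopal complexes that arises from the addition of the hyperplane $H_e$. The proposition further identifies the auxiliary broken toric variety attached to the intersection polytopal complex: in the notation of Section~\ref{section:bam}, we have $X=h^{-1}_{\frakD(\Gamma/e)}(0)$, $X^{\cup}=h^{-1}_{\frakD(\Gamma)}(0)$, and $X^{\cap}=h^{-1}_{\frakD(\Gamma\setminus e)}(0)$. The genericity hypothesis required to apply the balloon animal theorems is already verified inside the proof of Proposition~\ref{delconballoon}, since the stability of the shifted hyperplane arrangement is exactly what guarantees that the subdivision meets the genericity condition.

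With these identifications in hand, I would apply Theorem~\ref{bigballoonpf} to obtain a long exact sequence
\begin{equation*}
\dots\to H^{k}(h^{-1}_{\frakD(\Gamma)}(0),\underline{\QQ})\to H^{k}(h^{-1}_{\frakD(\Gamma/e)}(0),\underline{\QQ})\to H^{k-1}(h^{-1}_{\frakD(\Gamma\setminus e)}(0),\underline{\QQ})\to\dots
\end{equation*}
To conclude, I would invoke the fact recorded in Subsection~\ref{props} that the inclusion $h^{-1}_{\Gamma'}(0)\hookrightarrow \frakD(\Gamma')$ is a deformation retract and hence induces an isomorphism on rational cohomology for every graph $\Gamma'$. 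Applying this separately to $\Gamma$, $\Gamma/e$, and $\Gamma\setminus e$ translates the long exact sequence above into the claimed sequence
\begin{equation*}
\dots\to H^{k}(\frakD(\Gamma),\underline{\QQ}_{\frakD(\Gamma)})\to H^k(\frakD(\Gamma/e),\underline{\QQ}_{\frakD(\Gamma/e)})\to H^{k-1}(\frakD(\Gamma\setminus e),\underline{\QQ}_{\frakD(\Gamma\setminus e)})\to\dots
\end{equation*}

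Because every ingredient has already been assembled, the main obstacle is not conceptual but rather bookkeeping: I want to make sure the roles of $X$, $X^\cup$, and $X^\cap$ are assigned in the correct order so that the direction of the maps matches the statement of the theorem (the contraction $\Gamma/e$ corresponds to the \emph{coarser} complex $P_\bullet$ and $\Gamma$ to its subdivision $P_\bullet^\cup$, which is why the sequence reads from $\frakD(\Gamma)$ to $\frakD(\Gamma/e)$ and then shifts down to $\frakD(\Gamma\setminus e)$). Once this is checked, the proof is a one-line deduction from the two earlier results.
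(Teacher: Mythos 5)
Your proof is correct and follows exactly the same route as the paper: invoke Proposition~\ref{delconballoon} to realize the deletion-contraction data as a balloon animal map with $X=h^{-1}_{\frakD(\Gamma/e)}(0)$, $X^\cup=h^{-1}_{\frakD(\Gamma)}(0)$, $X^\cap=h^{-1}_{\frakD(\Gamma\setminus e)}(0)$, apply Theorem~\ref{bigballoonpf}, and transfer to the ambient varieties via the deformation retracts onto the central fibres. Your bookkeeping of which graph corresponds to the coarser versus refined complex (and hence the direction of the maps in the long exact sequence) is also correct.
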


\begin{proof}
Theorem \ref{bigballoonpf} and Proposition \ref{delconballoon} combine to yield the result at the level of central fibres $h^{-1}(0)$, and the inclusion of these fibres into their respective ambient hypertoric Hitchin systems induces an isomorphism on cohomology.
\end{proof}

\begin{theorem}\label{dcpf}
Let $\Gamma$ be a graph and $e$ be an edge of $\Gamma$ which is not a loop. Then there is a long exact sequence
\begin{equation*}
\begin{tikzpicture}[descr/.style={fill=white,inner sep=1.5pt}]
        \matrix (m) [
            matrix of math nodes,
            row sep=1em,
            column sep=1.5em,
            text height=1.5ex, text depth=0.25ex
        ]
        {  & &\cdots & H^{k-1}(T^{n-1}, R^{i-1}f_*\underline{\QQ}_{\frakD(\Gamma\setminus e)}) \\
            &H^{k}(T^n, R^{i}f_*\underline{\QQ}_{\frakD(\Gamma)})
            &H^{k}(T^n, R^{i}f_*\underline{\QQ}_{\frakD(\Gamma/ e)})
            &H^{k}(T^{n-1}, R^{i-1}f_*\underline{\QQ}_{\frakD(\Gamma\setminus e)}) \\
            & H^{k+1}(T^n, R^{i}f_*\underline{\QQ}_{\frakD(\Gamma)})& \cdots && \\
        };

        \path[overlay,->, font=\scriptsize,>=latex]
        (m-1-3) edge (m-1-4)
        (m-1-4) edge[out=355,in=175] (m-2-2)
        (m-2-2) edge  (m-2-3)
        (m-2-3) edge  (m-2-4)
        (m-2-4) edge[out=355,in=175] (m-3-2)
        (m-3-2) edge (m-3-3);
\end{tikzpicture}
\end{equation*}
\end{theorem}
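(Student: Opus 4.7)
The proof will be a direct unpacking: Proposition \ref{delconballoon} provides the balloon animal data associated to the contraction $\Gamma \to \Gamma/e$, and Theorem \ref{balloonpf} is precisely the long exact sequence one obtains on the level of higher derived pushforwards. The strategy is thus to substitute the former into the latter; no new ingredient is required.

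Concretely, first I would invoke Proposition \ref{delconballoon} to produce the balloon animal map $b: h^{-1}_{\frakD(\Gamma/e)}(0) \to h^{-1}_{\frakD(\Gamma)}(0)$ arising from the generic subdivision obtained by adjoining the single hyperplane corresponding to $e$, and to identify $X^\cap = h^{-1}_{\frakD(\Gamma \setminus e)}(0)$. Here the coarser polytopal complex belongs to $\Gamma/e$ and the refined one to $\Gamma$, so in the notation of Section \ref{section:bam} we have $X = h^{-1}_{\frakD(\Gamma/e)}(0)$ and $X^\cup = h^{-1}_{\frakD(\Gamma)}(0)$.

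Next, I would identify the topological types of the three polytopal complexes with tori of the appropriate dimensions. Since $e$ is not a loop, contracting it preserves the first Betti number, so both $P_\bullet$ (for $\Gamma/e$) and $P^\cup_\bullet$ (for $\Gamma$) are copies of $T^n$ with $n = b_1(\Gamma)$. Since deleting a non-bridge edge decreases the first Betti number by one, and since contracting bridges leaves the hypertoric Hitchin system unchanged (as noted in Subsection \ref{tate}), we may assume $b_1(\Gamma \setminus e) = n-1$, giving $P^\cap_\bullet \cong T^{n-1}$.

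Finally, I plug these identifications into the long exact sequence of Theorem \ref{balloonpf}. The resulting sequence, once the abbreviation $\underline{\QQ}_{\frakD(\Gamma)}$ is interpreted in the usual way as the constant sheaf on the central fibre (along the quotient map $f$ from the central fibre to its polytopal complex), is exactly the one claimed. I do not anticipate any real obstacle, since all of the substantive content has been absorbed into Proposition \ref{delconballoon} and Theorem \ref{balloonpf}; the only thing to verify is that the arrows point in the direction asserted, which is forced by the same convention already used in the proof of Theorem \ref{thm_delcon}.
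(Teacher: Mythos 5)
Your proposal is correct and matches the paper's own proof, which states that the theorem is precisely a special case of Theorem \ref{balloonpf} applied to the balloon animal maps arising from the deletion--contraction relation via Proposition \ref{delconballoon}. You have simply spelled out the substitution (identifying $X$, $X^\cup$, $X^\cap$ and the torus dimensions) in more detail than the paper's one-line proof.
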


\begin{proof}
This is a special case of Theorem \ref{balloonpf} in the case that the balloon animal maps are between special fibres of hypertoric Hitchin systems and arising from a deletion-contraction relation.
\end{proof}

One more useful tool for computing cohomologies of hypertoric Hitchin systems is the following, which can be thought of as a further refinement of Theorem \ref{dcpf}.

\begin{theorem}\label{smalldelcon}
Let $\Gamma$ be a graph with $b_1(\Gamma) =n \geq 2$ and $e$ an edge of $\Gamma$ which is incident to a vertex of degree $2$. Then 
\begin{equation*}
H^k(T^n, R^if_*\underline{\QQ}_{\Gamma}) \cong H^k(T^n, R^if_*\underline{\QQ}_{\Gamma / e}) \oplus H^{k-1}(T^n, R^{i-1}f_*\underline{\QQ}_{\Gamma\setminus e}).
\end{equation*}
\end{theorem}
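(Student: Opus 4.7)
The plan is to deduce the decomposition from the long exact sequence of Theorem~\ref{dcpf}, obtained from the balloon animal structure described in Proposition~\ref{delconballoon}:
\begin{equation*}
\cdots \to H^{k-1}(T^{n-1}, R^{i-1}f_*\underline{\QQ}_{\Gamma\setminus e}) \xrightarrow{\partial} H^{k}(T^n, R^{i}f_*\underline{\QQ}_{\Gamma}) \to H^{k}(T^n, R^{i}f_*\underline{\QQ}_{\Gamma/e}) \xrightarrow{\rho} H^{k}(T^{n-1}, R^{i-1}f_*\underline{\QQ}_{\Gamma\setminus e}) \to \cdots
\end{equation*}
The central claim is that the map $\rho$ vanishes for every $k$ and $i$. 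Once this is established, exactness forces $\partial$ to be injective and the map $H^k(T^n, R^if_*\underline{\QQ}_\Gamma) \to H^k(T^n, R^if_*\underline{\QQ}_{\Gamma/e})$ to be surjective, so the sequence breaks into short exact sequences
\begin{equation*}
0 \to H^{k-1}(T^{n-1}, R^{i-1}f_*\underline{\QQ}_{\Gamma\setminus e}) \to H^k(T^n, R^if_*\underline{\QQ}_\Gamma) \to H^k(T^n, R^if_*\underline{\QQ}_{\Gamma/e}) \to 0,
\end{equation*}
which split automatically as sequences of finite-dimensional $\QQ$-vector spaces, yielding the claimed direct sum decomposition.

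To prove $\rho = 0$, I would construct a cochain-level null-homotopy of the map of cellular cochain complexes inducing $\rho$, using the geometric input supplied by the degree-$2$ hypothesis. Since every cycle through the degree-$2$ vertex $v$ uses the two incident edges $e, e'$ consecutively, one has $[e] = \pm [e']$ in $H^1(\Gamma,\RR)$, and hence the hyperplanes $H_e$ and $H_{e'}$ in the periodic arrangement describing $P_\bullet(\Gamma)$ are parallel. The parallel copy $H_{e'}$ supplies a globally well-defined transverse direction to $H_e$, and contraction with this vector followed by restriction to $H_e$ yields a candidate map $h \colon C^k(R^if_*\underline{\QQ}_{\Gamma/e}) \to C^{k-1}(R^{i-1}f_*\underline{\QQ}_{\Gamma\setminus e})$. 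The plan is to verify that $h$ satisfies the cochain-homotopy relation $\rho = d \circ h + h \circ d$, so that $\rho$ vanishes on cohomology.

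The main obstacle will be the global compatibility check. The construction of $h$ is essentially canonical on each cell adjacent to $H_e$, but one must verify that the restriction maps of the cellular sheaves respect these choices and then compute the homotopy relation cell-by-cell, particularly at the higher-codimension cells where $H_e$ meets other hyperplanes of the arrangement. Parallelism of $H_e$ and $H_{e'}$ is the precise combinatorial input ensuring that the transverse direction is globally well-defined; without the degree-$2$ hypothesis the local choices would conflict and no null-homotopy could exist. Once the combinatorial verification is done, the remainder of the proof follows formally from exactness and the splitting of short exact sequences of $\QQ$-vector spaces.
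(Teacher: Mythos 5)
Your strategic reduction is correct and matches the paper: showing $\rho=\eta_k=0$ forces the long exact sequence of Theorem~\ref{dcpf} into short exact sequences of finite-dimensional $\QQ$-vector spaces, which split automatically, giving the claimed direct sum. You also correctly identify the parallelism of $H_e$ and $H_{e'}$ as the geometric content of the degree-$2$ hypothesis. But the mechanism you propose for proving $\rho=0$ is not the paper's, and it is not carried through. The paper shows $\eta_k=0$ by exhibiting $b_k$ as split surjective at the level of cell-compatible sheaves: the observation is that letting the shift difference $\tilde\alpha_e-\tilde\alpha_{e'}$ degenerate to zero turns the arrangement $A_\Gamma$ into $A_{\Gamma/e}$, and parallelism of $h_e$ and $h_{e'}$ is exactly what makes this degeneration compatible with the cellular restriction maps (stalks over $h_e$ and $h_{e'}$ agree, and so do the surrounding $n$-cell stalks). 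This is a global sheaf-level argument and no cell-by-cell identity needs to be checked.

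Your route---a direct cochain-level null-homotopy of $\rho^\#$---is not obviously wrong, but there is a genuine gap where the substance of the theorem should be. The candidate homotopy operator $h$ (``contraction with this vector followed by restriction to $H_e$'') is underspecified: a $k$-cochain for $R^if_*\underline{\QQ}_{\Gamma/e}$ is indexed by $k$-cells of the arrangement $A_{\Gamma/e}$, a $(k-1)$-cochain for $R^{i-1}f_*\underline{\QQ}_{\Gamma\setminus e}$ is indexed by $(k-1)$-cells of $h_e$ in the arrangement $A_{\Gamma\setminus e}$, and these indexing sets do not match up bijectively, so the cell-wise formula does not yet define a linear map. Moreover the ``restriction to $H_e$'' step requires a projection $\bigwedge^{i-1}H_\sigma\to\bigwedge^{i-1}H_\tau$ for a codimension-one inclusion $H_\tau\subset H_\sigma$, which is not canonical and whose compatibility with the cellular restriction maps is precisely what needs to be argued. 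Finally, you explicitly defer verification of the homotopy identity $\rho^\#=dh+hd$, which is the crux of the proof. Note also that any single hyperplane determines a transverse direction; parallelism does not ``supply'' it so much as it guarantees coherence along the walls where $h_e$ interacts with $h_{e'}$---which is exactly what the paper's shift-degeneration packages more efficiently. As written, this is a plausible plan of attack, not a proof.
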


\begin{proof}
Consider the long exact sequence of Theorem \ref{dcpf}:
\begin{equation*}
\begin{tikzpicture}[descr/.style={fill=white,inner sep=1.5pt}]
        \matrix (m) [
            matrix of math nodes,
            row sep=1em,
            column sep=2.5em,
            text height=1.5ex, text depth=0.25ex
        ]
        {  & &\cdots & H^{k-1}(T^{n-1}, R^{i-1}f_*\underline{\QQ}_{\Gamma\setminus e}) \\
            &H^{k}(T^n, R^{i}f_*\underline{\QQ}_{\Gamma})
            &H^{k}(T^n, R^{i}f_*\underline{\QQ}_{\Gamma/e})
            &H^{k}(T^{n-1}, R^{i-1}f_*\underline{\QQ}_{\Gamma\setminus e}) \\
            & H^{k+1}(T^n, R^{i}f_*\underline{\QQ}_{\Gamma})& \cdots && \\
        };

        \path[overlay,->, font=\scriptsize,>=latex]
        (m-1-3) edge (m-1-4)
        (m-1-4) edge[out=355,in=175] (m-2-2)
        (m-2-2) edge node[descr,yshift=1.18ex] {$b_k$} (m-2-3)
        (m-2-3) edge node[descr,yshift=1ex] {$\eta_k$} (m-2-4)
        (m-2-4) edge[out=355,in=175] (m-3-2)
        (m-3-2) edge (m-3-3);
\end{tikzpicture}
\end{equation*}

The statement of the theorem is that $\eta_k=0$ for all $k$, which is in turn equivalent to the statement that the map on cohomology $b_k$ induced by the balloon animal map has a section. 

To construct such a section, consider the periodic hyperplane arrangements $A_\Gamma$ and $A_{\Gamma/e}$ associated to $\Gamma$ and $\Gamma/e$ respectively, which describe the cell-compatible sheaves $R^if_*\underline{\QQ}_{\Gamma}$ and $R^if_*\underline{\QQ}_{\Gamma/e}$. The only difference between these arrangements is the presence in $A_\Gamma$ of the shifted hyperplane $h_{e}$ which is parallel to $h_{e'}$ where $e'$ is the other edge incident to $v$. These two hyperplanes are parallel since $e$ and $e'$ lie on all the same cycles in $\Gamma$ so they define the same central hyperplane. They are shifted by different amounts $\tilde{\alpha}_{e},\tilde{\alpha}_{e'}$ to ensure genericity of the arrangement. Taking the difference of their shifts $\tilde{\alpha}_{e}-\tilde{\alpha}_{e'}$ to zero, one essentially forgets the hyperplane $h_{e}$ and so defines a map of cell-compatible sheaves $s:R^if_*\underline{\QQ}_{\Gamma}\to R^if_*\underline{\QQ}_{\Gamma/e}$. Cell-compatibility is preserved by this operation of changing the shifts since the stalks over $h_e$ and $h_{e'}$ are the same and the stalks over all the $n$-cells surrounding them also all match. The map $s$ induces a map in cohomology $H^k(R^if_*\underline{\QQ}_{\Gamma/e})\to H^k(R^if_*\underline{\QQ}_{\Gamma})$ which is a section of $b_k$.
\end{proof}

\section{Cohomology Calculations}\label{cohomo}

In this section we describe a potpourri of results regarding actual calculations of the cohomology of hypertoric Hitchin systems. We start with a statement that allows us to ``break up" the central fibre $\frakD(\Gamma)$ for graphs which exhibit a certain kind of decomposability.

\begin{definition} Let $\Gamma$ be a connected graph. The \emph{induced subgraph} of a subset of edges $E'\subset E(\Gamma)$ is the subgraph of $\Gamma$ containing only the edges of $E'$ and all vertices which those edges are incident to. A vertex $v$ of $\Gamma$ is a \emph{disconnecting vertex} if the set of edges $E(\Gamma)$ of $\Gamma$ can be partitioned into two subsets $E_1,E_2$ such that the intersection of the induced subgraphs of $E_1$ and $E_2$ is $\{v\}$.
\end{definition}

A disconnecting vertex is slightly different than a \emph{cut vertex}, which is usually defined as a vertex $v$ in a graph $\Gamma$ such that if $v$ and all of its incident vertices were removed, the resulting graph would be disconnected. These definitions can differ when the graph has loops. For example, the graph consisting of two loops at a single vertex has a disconnecting vertex but no cut vertices.

\begin{theorem}\label{discondecomp}
Let $v$ be a disconnecting vertex of a connected graph $\Gamma$ with $b_1(\Gamma)=n$ so that $\Gamma$ can be written as the union of a finite number of connected graphs $\Gamma_1,\ldots,\Gamma_m$ with $\bigcap_{i=1}^m\Gamma_i = \{v\}$. Then there exists $T^n$-torsor $\alpha$ on $P_\bullet(h_\Gamma^{-1}(0))$ such that
$$\alpha\cdot h_\Gamma^{-1}(0) \cong \prod_{i=1}^m h_{\Gamma_i}^{-1}(0).$$
\end{theorem}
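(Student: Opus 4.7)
The plan is to establish the decomposition first at the level of polytopal complexes, then invoke Lemma \ref{torsor_action} to absorb any discrepancy in gluing data into the torsor $\alpha$. The starting observation is homological: because $v$ is the only vertex common to the $\Gamma_i$ and the edge sets $E(\Gamma_i)$ are pairwise disjoint, any cycle in $\Gamma$ is contained entirely in one $\Gamma_i$, since a closed path entering $\Gamma_i$ through $v$ must exit through $v$ along another edge of $\Gamma_i$ (no other shared vertices exist). This yields a direct sum splitting
\begin{equation*}
H_1(\Gamma, \ZZ) \cong \bigoplus_{i=1}^m H_1(\Gamma_i, \ZZ), \qquad n = \sum_{i=1}^m b_1(\Gamma_i),
\end{equation*}
and correspondingly $H_1(\Gamma, \RR)/H_1(\Gamma, \ZZ) \cong \prod_{i=1}^m T^{b_1(\Gamma_i)}$.

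With this in hand, I would analyze the periodic hyperplane arrangement describing $P_\bullet(h_\Gamma^{-1}(0))$ edge by edge. For $e \in \Gamma_i$, the linear form $[e] : H_1(\Gamma, \RR) \to \RR$ vanishes on each $H_1(\Gamma_j, \RR)$ with $j \neq i$, since $e$ contributes trivially to cycles supported in $\Gamma_j$; hence $[e]$ factors through the projection $H_1(\Gamma, \RR) \to H_1(\Gamma_i, \RR)$. Consequently the shifted hyperplane $h_e$ is the preimage under this projection of the corresponding hyperplane in $T^{b_1(\Gamma_i)}$, so under the identification above the full arrangement $A_\Gamma$ is the product of the individual arrangements $A_{\Gamma_i}$. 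This gives an isomorphism of polytopal complexes
\begin{equation*}
P_\bullet(h_\Gamma^{-1}(0)) \cong \prod_{i=1}^m P_\bullet(h_{\Gamma_i}^{-1}(0)).
\end{equation*}

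The final step is to note that $\prod_{i=1}^m h_{\Gamma_i}^{-1}(0)$ is itself naturally a broken toric variety over this product polytopal complex, its components being Cartesian products of components of the $h_{\Gamma_i}^{-1}(0)$ and its gluing maps being induced factorwise. Both $h_\Gamma^{-1}(0)$ and $\prod_i h_{\Gamma_i}^{-1}(0)$ are thereby broken toric varieties over the same polytopal complex $P_\bullet$, so Lemma \ref{torsor_action} supplies an $\alpha \in H^1(P_\bullet, T^n_\CC)$ whose action carries one to the other. The main obstacle I anticipate is bookkeeping the generic shifts: I need to check that a generic stability parameter for $\Gamma$ restricts to a generic stability parameter on each $\Gamma_i$, so that the factor arrangements are genuine shifted hyperplane arrangements describing each $h_{\Gamma_i}^{-1}(0)$. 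This should follow combinatorially, since any failure of genericity on some $\Gamma_i$ extends to a failure on $\Gamma$ by assigning nonzero values to the edges outside $E(\Gamma_i)$; and the independence of cohomology from the choice of generic parameter makes any residual ambiguity harmless.
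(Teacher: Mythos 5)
Your proposal follows essentially the same route as the paper's proof: establish that the hyperplane arrangement (and hence the polytopal complex) of $h_\Gamma^{-1}(0)$ factors as the product of the arrangements of the $h_{\Gamma_i}^{-1}(0)$, using the key observation that for $e \in \Gamma_i$ the linear form $[e]$ vanishes on $H_1(\Gamma_j,\RR)$ for $j \neq i$, and then invoke Lemma~\ref{torsor_action} to absorb the gluing data into the torsor $\alpha$. Your explicit verification of the splitting $H_1(\Gamma,\ZZ) \cong \bigoplus_i H_1(\Gamma_i,\ZZ)$ and your flagging of the genericity-of-restriction issue are reasonable additional detail that the paper leaves implicit, but the argument structure is the same.
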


\begin{proof}
We will first show that the polytopal complexes of $h_\Gamma^{-1}(0)$ and of $\prod_{i=1}^m h_{\Gamma_i}^{-1}(0)$ are identical. This statement follows from the description of the hyperplane arrangements of $\Gamma_i$ which describe the polytopal complexes of $h_{\Gamma_i}^{-1}(0)$. Each $\Gamma_i$ defines an arrangement $A_i$ on $\RR^{b_1(\Gamma_i)}$, and we claim that the arrangement $\prod_{i=1}^m A_i$ on $\prod_{i=1}^m \RR^{b_1(\Gamma_i)}$ is the same as the arrangement $A$ on $\RR^{b_1(\Gamma)}$ defined by $\Gamma$. 

This is true, since given an edge $e$ in $\Gamma_i$ with associated central hyperplane $H'_e$, the morphism $[e]:H(\Gamma_j,\RR)\to \RR$ is zero for any $j\neq i$. That is to say, the central hyperplane $H_e$ in $\RR^{b_1(\Gamma)}$ can be written as $H'_e \times \prod_{k=1, k\neq i}^m \RR^{b_1(\Gamma_i)}$.

Now, by Lemma \ref{torsor_action} there is some $\alpha\in H^1(P_\bullet(h_\Gamma^{-1}(0)),T^n_\CC)$ with the desired property.
\end{proof}

By the Künneth formula we get as a corollary that the cohomology of $\frakD(\Gamma)$ can be decomposed in a similar way.

\begin{corollary}\label{disconcorr}
Let $v$, $\Gamma$, and $\Gamma_i$ be as in Theorem \ref{discondecomp}.  Then the Poincar\'e polynomial of $\frakD(\Gamma)$ is the product of the Poincar\'e polynomials of $\frakD(\Gamma_i)$, or equivalently
$$H^k(\frakD(\Gamma),\underline{\QQ}_{\frakD(\Gamma)}) \cong \bigoplus_{k_1+\dots+k_m=k}\left(\bigotimes_{i=1}^m H^{k_i}(\frakD(\Gamma_i),\underline{\QQ}_{\frakD(\Gamma_i)})\right)$$
\end{corollary}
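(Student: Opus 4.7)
The plan is essentially to invoke Theorem \ref{discondecomp} and then apply the Künneth formula, being careful to pass between the hypertoric Hitchin systems $\frakD(\Gamma)$ and their central fibres $h_\Gamma^{-1}(0)$ along the way.

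First, I would use the fact (recalled in Subsection \ref{props}) that the inclusion of the central fibre into its ambient hypertoric Hitchin system is a deformation retract, so $H^\bullet(\frakD(\Gamma),\uQQ_{\frakD(\Gamma)}) \cong H^\bullet(h_\Gamma^{-1}(0),\uQQ_\Gamma)$, and similarly for each $\Gamma_i$. Thus it suffices to establish the Künneth-type decomposition
\begin{equation*}
H^k(h_\Gamma^{-1}(0),\uQQ_\Gamma) \cong \bigoplus_{k_1+\dots+k_m=k}\bigotimes_{i=1}^m H^{k_i}(h_{\Gamma_i}^{-1}(0),\uQQ_{\Gamma_i}).
\end{equation*}

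Next, by Theorem \ref{discondecomp} there exists a $T^n$-torsor $\alpha \in H^1(P_\bullet(h_\Gamma^{-1}(0)),T^n_\CC)$ such that $\alpha\cdot h_\Gamma^{-1}(0) \cong \prod_{i=1}^m h_{\Gamma_i}^{-1}(0)$. By Lemma \ref{torsor_action} together with the remark following it (that rational cohomology is invariant under the $H^1(P_\bullet,T^n_\CC)$-action on the set of isomorphism classes of broken toric varieties over $P_\bullet$), we have $H^\bullet(\alpha \cdot h_\Gamma^{-1}(0),\uQQ) \cong H^\bullet(h_\Gamma^{-1}(0),\uQQ_\Gamma)$. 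Combining, we obtain
\begin{equation*}
H^\bullet(h_\Gamma^{-1}(0),\uQQ_\Gamma) \cong H^\bullet\Bigl(\prod_{i=1}^m h_{\Gamma_i}^{-1}(0),\uQQ\Bigr).
\end{equation*}

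Finally, since each $h_{\Gamma_i}^{-1}(0)$ has finite-dimensional rational cohomology in every degree (it is a compact broken toric variety), the classical Künneth formula with rational coefficients gives the desired tensor product decomposition on the right-hand side, and reinterpreting each factor as $H^\bullet(\frakD(\Gamma_i),\uQQ_{\frakD(\Gamma_i)})$ via the deformation retract completes the proof. There is no real obstacle; the only point requiring any care is the passage through the torsor action, which is handled cleanly by Lemma \ref{torsor_action}.
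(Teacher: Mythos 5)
Your argument is correct and is essentially the paper's own proof, which it compresses to a single sentence ("by the Künneth formula"); you have simply made explicit the two auxiliary ingredients the paper leaves implicit, namely the deformation retract of $\frakD(\Gamma)$ onto $h_\Gamma^{-1}(0)$ and the invariance of rational cohomology under the $H^1(P_\bullet,T^n_\CC)$-torsor action from Lemma \ref{torsor_action}. Nothing further is needed.
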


Next up, we calculate the top-degree cohomology of $\frakD(\Gamma)$ for any graph. Along the way we will use the following definition and lemma.

\begin{definition}
Let $\Theta_k$ denote the graph with two vertices and $k$ edges between them.
\end{definition}

\begin{lemma}\label{reduceme}
Any connected graph $\Gamma$ with no disconnecting vertices can be reduced to $\Theta_{b_1(\Gamma)+1}$ via contraction.
\end{lemma}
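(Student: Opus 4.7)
The plan is to proceed by induction on $|V(\Gamma)|$. For the base case $|V(\Gamma)| = 2$, a connected graph on two vertices with no disconnecting vertex can have no loops—a single loop at either vertex would yield a disconnecting partition with that vertex as the distinguished one—so $\Gamma$ equals $\Theta_k$ with $k = |E(\Gamma)|$. Since $b_1(\Theta_k) = k - 1$, we have $\Gamma = \Theta_{b_1(\Gamma)+1}$ already, and no contraction is needed.

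For the inductive step, assume $|V(\Gamma)| \geq 3$. The strategy is to exhibit a non-loop edge $e$ of $\Gamma$ whose contraction $\Gamma/e$ is connected and still has no disconnecting vertex. Because contracting a non-loop edge reduces $|V|$ by one, preserves connectedness, and leaves $b_1 = |E| - |V| + 1$ unchanged, the inductive hypothesis applied to $\Gamma/e$ then produces a sequence of contractions $\Gamma \to \Gamma/e \to \cdots \to \Theta_{b_1(\Gamma/e)+1} = \Theta_{b_1(\Gamma)+1}$, completing the reduction.

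To locate such an $e$, the natural device is an open ear decomposition $\Gamma = C_0 \cup P_1 \cup \cdots \cup P_m$, available for any graph with at least three vertices and no disconnecting vertex. When the last ear $P_m$ contains an interior vertex, contracting one of its incident edges simply shortens $P_m$ by one edge and leaves the ear decomposition—together with the no-disconnecting-vertex property—intact. When every ear has length one, so that $\Gamma$ is a cycle $C_0$ with chords (and possibly parallel edges), one instead selects an edge of $C_0$ that is not part of a multi-edge; contracting it preserves the cycle-plus-chords structure without producing a loop at the merged vertex.

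The main technical obstacle, as I see it, is the case in which $\Gamma$ is decorated with many parallel edges, so that every candidate contraction risks creating a loop and hence a new disconnecting vertex (any loop at a vertex $v$ immediately exhibits $v$ as disconnecting via the partition separating the loop from the rest of $E(\Gamma)$). Here one must use the no-disconnecting-vertex hypothesis in a nontrivial way to guarantee a ``safe'' edge for contraction, or else recognize directly that the remaining configurations are already of the form $\Theta_{b_1(\Gamma)+1}$ and need no further reduction. The careful bookkeeping around multi-edges in this final sub-case is where the bulk of the combinatorial work lies.
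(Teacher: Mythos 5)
Your strategy---induction on $|V(\Gamma)|$ using an open ear decomposition to locate a safe contraction---is genuinely different from the paper's, which proceeds by induction on $b_1(\Gamma)$, picks a subgraph $\Gamma'$ with first Betti number one lower, applies the inductive hypothesis to contract $\Gamma'$ to a $\Theta$, and then argues the remaining edges can be further contracted to a single edge. Your framework has the virtue of making explicit exactly \emph{where} something must be checked: when the last ear has an interior vertex, that vertex has degree $2$ in $\Gamma$, so the contraction cannot create a loop, and the shortened ear decomposition certifies $2$-connectedness of the result, so no new disconnecting vertex appears. The paper's proof, by contrast, rests on the bare assertion that ``contractions cannot introduce disconnecting vertices to a graph,'' with no supporting argument supplied.

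That said, your proof is explicitly incomplete, and the sub-case you defer is not a technicality---it is where the statement itself breaks down. Take the doubled triangle: three vertices with each pair joined by two parallel edges. It is connected, loopless, and $2$-connected, hence has no disconnecting vertex, and $b_1 = 4$, so the lemma promises a contraction sequence reaching $\Theta_5$. Every ear beyond $C_0$ is a single parallel edge, putting you squarely in your problematic sub-case, and since every edge of this graph belongs to a parallel pair, contracting any edge produces a loop at the merged vertex. Loops persist under further contraction and $\Theta_5$ is loopless, so no contraction sequence can reach it. (The same example also refutes the paper's assertion: contracting one of the two $v_1v_2$ edges immediately makes the merged vertex disconnecting via the new loop.) The gap you identified therefore cannot be closed by more careful bookkeeping; the lemma needs an extra hypothesis (for instance, excluding parallel edges, after which your ear-decomposition argument looks salvageable) or a modified conclusion before either your route or the paper's can go through.
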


\begin{proof}
A connected graph $\Gamma$ with no disconnecting vertices and $b_1(\Gamma)=1$ is simply a cycle graph, which can be transformed to $\Theta_2$ via repeated contraction operations. 

Now assume that the statement holds for any connected graph with no disconnecting vertices and first Betti number $k.$ To show that it holds when $\Gamma$ is a connected graph with no disconnecting vertices and $b_1(\Gamma)=k+1$, choose a subgraph $\Gamma'$ of $\Gamma$ with first Betti number $k$ and let $\Gamma''$ denote the subgraph induced by all edges that not in this chosen subgraph. Apply to $\Gamma$ the contractions which lead to the subgraph $\Gamma'$ being reduced to $\Theta_{k+1}$. Since contractions cannot introduce disconnecting vertices to a graph, $\Gamma''\cap \Theta_{k+1} = V(\Theta_{k+1}).$ This implies that $\Gamma''$ can have no cycles, or else $b_1(\Gamma)$ would be greater than $k+1$. Hence $\Gamma''$ can be transformed to $\Theta_1$ via contractions. Apply these contractions to $\Gamma''$ as a subgraph of $\Gamma$ to finally yield $\Theta_{k+2}$.
\end{proof}

\begin{lemma}\label{tree}
For $\Gamma$ a graph with $b_1(\Gamma)=n$ we have
$$h^{2n}(\frakD(\Gamma),\underline{\QQ}_{\frakD(\Gamma)}) = h^n(T^n,R^nf_*\underline{\QQ}_\Gamma) = |\mathring{\text{Sk}}_n(P_\bullet(h_\Gamma^{-1}(0)))| = t(\Gamma)$$ where $t(\Gamma)$ is the number of spanning trees of $\Gamma$.
\end{lemma}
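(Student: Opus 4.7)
The plan is to chain three identifications and reduce the theorem to the classical deletion-contraction identity for spanning trees.

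First, I would establish the identity $h^{2n}(\frakD(\Gamma)) = h^n(T^n, R^nf_*\uQQ_\Gamma)$ by a direct dimension count in the decomposition theorem. Using the deformation retract of $\frakD(\Gamma)$ onto its central fibre $X = h_\Gamma^{-1}(0)$, together with the decomposition \eqref{cohomdecomp}, one obtains $H^{2n}(X, \uQQ_X) \cong \bigoplus_{p+q = 2n} H^q(T^n, R^pf_*\uQQ_X)$. By Corollary \ref{fibre_cohom} the stalk of $R^pf_*\uQQ_X$ at a point of $\mathring{\text{Sk}}_k$ is $\bigwedge^p H_\alpha$, which vanishes whenever $p > k$; in particular $R^pf_*\uQQ_X = 0$ for $p > n$. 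Combined with $H^q(T^n, -) = 0$ for $q > n$, this forces the only surviving summand to be $p = q = n$.

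Next, to identify $H^n(T^n, R^nf_*\uQQ_\Gamma)$ with $|\mathring{\text{Sk}}_n(P_\bullet)|$, observe from the stalk calculation above that $R^nf_*\uQQ_X$ is concentrated on $\mathring{\text{Sk}}_n$ with one-dimensional stalks, so $R^nf_*\uQQ_X = R^nf_*\uQQ_X|_{\mathring{\text{Sk}}_n}$. Lemma \ref{gribby} applied with $i = j = n$ then yields $H^n(T^n, R^nf_*\uQQ_\Gamma) \cong \QQ^{|\mathring{\text{Sk}}_n(P_\bullet)|}$ immediately.

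The heart of the proof is the combinatorial identity $|\mathring{\text{Sk}}_n(P_\bullet(h_\Gamma^{-1}(0)))| = t(\Gamma)$, which I would establish by induction on $|E(\Gamma)|$. Bridges may be ignored since $\frakD(\Gamma) = \frakD(\Gamma/e)$ and $t(\Gamma) = t(\Gamma/e)$ when $e$ is a bridge, so $\Gamma$ may be assumed bridgeless. The base case is a bouquet of $n$ loops, for which the periodic arrangement consists of $n$ shifted coordinate tori cutting $T^n$ into a single top-dimensional cell, matching $t = 1$. For the inductive step, pick a non-loop edge $e$ of $\Gamma$. By Proposition \ref{delconballoon}, $P_\bullet(h_\Gamma^{-1}(0))$ is the generic subdivision of $P_\bullet(h_{\Gamma/e}^{-1}(0))$ obtained by adding the single hyperplane $h_e$, with subdivision locus equal to $P_\bullet(h_{\Gamma\setminus e}^{-1}(0))$. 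Each $n$-cell of $P_\bullet(\Gamma/e)$ is either left intact in $P_\bullet(\Gamma)$ or is bisected by $h_e$ into two new $n$-cells, and the bisections correspond bijectively to the top-dimensional cells of the subdivision locus. This gives
\[
|\mathring{\text{Sk}}_n(P_\bullet(\Gamma))| = |\mathring{\text{Sk}}_n(P_\bullet(\Gamma/e))| + |\mathring{\text{Sk}}_{n-1}(P_\bullet(\Gamma\setminus e))|,
\]
and the inductive hypothesis converts the right side into $t(\Gamma/e) + t(\Gamma\setminus e) = t(\Gamma)$ by the classical deletion-contraction formula for spanning trees.

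The main obstacle will be justifying the cell-count bijection in the inductive step with full care: since the polytopal complex lives on a torus, pieces of $h_e$ can wrap around, and one must verify that every new $n$-cell arises from a single clean bisection of a pre-existing $n$-cell. Genericity of the shift $\tilde{\alpha}_e$ is precisely what enforces this, and the already-established identification $X^\cap = h_{\Gamma\setminus e}^{-1}(0)$ from Proposition \ref{delconballoon} packages the top-cell bijection; once this bookkeeping is spelled out, the rest of the argument is routine.
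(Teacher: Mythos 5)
Your proof is correct and follows the same overall strategy as the paper (reduce to the cohomology of the central fibre via the decomposition theorem, isolate the degree-$(n,n)$ piece, and prove the cell count $|\mathring{\text{Sk}}_n| = t(\Gamma)$ by deletion-contraction induction). There are, however, two worthwhile differences in the details. For the first equality, you deduce the vanishing of the off-diagonal terms directly from Corollary \ref{fibre_cohom} (stalks of $R^pf_*\uQQ_X$ vanish for $p > n$) together with the cohomological dimension of $T^n$; the paper instead cites a vanishing lemma from \cite{Sun25}. Both are fine, but your version is self-contained within the text. For the combinatorial identity, the paper restricts to graphs with no disconnecting vertices, invokes Lemma \ref{reduceme} to reduce to the theta graphs $\Theta_k$, and uses those as base cases, citing Theorem \ref{discondecomp} to handle the disconnecting-vertex case separately. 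Your induction on $|E(\Gamma)|$ (contracting non-loop edges until only a bouquet of loops remains, where the arrangement is a single top cell matching $t = 1$) reaches the same conclusion without needing either of those auxiliary results, so it is cleaner. Note, though, that the bouquet of loops \emph{does} have a disconnecting vertex by the paper's definition, so your base case is precisely of the type the paper excludes from the inductive argument and handles through Theorem \ref{discondecomp}; it is a good sanity check that your direct verification and the product decomposition of Theorem \ref{discondecomp} give the same answer for bouquets. Both arguments share the same hand-wave at the key geometric step (that each $(n-1)$-cell of $P_\bullet(\Gamma\setminus e)$ bisects exactly one $n$-cell of $P_\bullet(\Gamma/e)$), which you flag appropriately; genericity is indeed what is needed there.
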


\begin{proof}
The first equality follows from the cohomological decomposition statement in Equation \ref{cohomdecomp} along with the vanishing given in Lemma 3.2 of \cite{Sun25}. For the second equality, use the localization sequence in compactly supported cohomology to find $H^n(T^n,R^nf_*\uQQ_\Gamma)\cong H^n_c(\mathring{\text{Sk}}_n,\uQQ_{\mathring{\text{Sk}}_n})$ followed by Poincar\'e duality.  The nontrivial part of this statement is that the number of $n$-cells in $P_\bullet(h_\Gamma^{-1}(0))$ is equal to the number of spanning trees of $\Gamma$.  

We will ignore the case of graphs with a disconnecting vertex since the central fibres of the associated hypertoric Hitchin systems can be decomposed by Theorem \ref{discondecomp}. The proof proceeds by induction thanks to Lemma \ref{reduceme}. The statement is true for $\Theta_k$, whose associated (unstable) periodic hyperplane arrangements consist of the sides of the unit cube $[0,1]^{k-1}$ and the hyperplane given by $x_1+\ldots+x_{k-1} =0$, the periodic version of which passes through $[0,1]^{k-1}$ $k-1$ times. To get a stable arrangement we shift the final hyperplane slightly, yielding $k$-many $(k-1)$-cells, which agrees with $t(\Theta_k)$.

The inductive step is as follows. Let $\Gamma$ be a graph with $b_1(\Gamma)=n$ and choose $e\in E(\Gamma).$ The hyperplane $h_e$ in $T^n$ corresponding to $e$ has on it the arrangement corresponding to $\Gamma\setminus e$. On the other hand, removing this hyperplane leaves us with the arrangement on $T^n$ corresponding to $\Gamma/e$.  Going back to the arrangement corresponding to $\Gamma$ by adding back in $h_e$, we note that each $n-1$ cell on $h_e$ intersects an $n$-cell of the arrangement corresponding to $\Gamma/e$, dividing it into two $n$-cells. The upshot is that $|\mathring{\text{Sk}}_n(P_\bullet(h_\Gamma^{-1}(0)))| = |\mathring{\text{Sk}}_n(P_\bullet(h_{\Gamma/e}^{-1}(0)))|+|\mathring{\text{Sk}}_n(P_\bullet(h_{\Gamma\setminus e}^{-1}(0)))| = t(\Gamma/e) + t(\Gamma\setminus e) = t(\Gamma)$, where the last equality comes about via the deletion-contraction relationship on the number of spanning trees.
\end{proof}

Let $\Gamma$ be a graph with no disconnecting vertices. Denote by $\hat\Gamma$ its \emph{base graph}, the graph we get by repeatedly contracting an edge in $\Gamma$ which is incident to a vertex of degree $2$. For a fixed first Betti number, there are a finite number of graphs with no degree $2$ vertices. 
Theorem \ref{smalldelcon} allows one to calculate the cohomology of any hypertoric Hitchin systems from knowledge of these base cases, formulated in the following corollary.

\begin{corollary}\label{intermsofbase}
Let $\Gamma$ be a graph with no disconnecting vertices, $\hat\Gamma$ its base graph, and $e_1,\dots, e_m$ an enumeration of the edges of $\hat\Gamma$. Further define a partition $\bigsqcup S_i$ of the set of degree $2$ vertices in $\Gamma$ via the relation $v\in S_i$ if there is a path from $v$ to a vertex adjacent to $e_i$ which goes through only degree $2$ vertices. Then
\begin{align*}
H^k(T^n,R^if_*\underline{\QQ}_\Gamma) &= H^k(T^n,R^if_*\underline{\QQ}_{\hat{\Gamma}}) \\&\qquad \oplus \bigoplus_{1\leq a_1<\dots<a_p\leq m} H^{k-p}(T^n,R^{i-p}f_*\underline{\QQ}_{\hat{\Gamma}\setminus e_1,\dots,e_p})^{\oplus\sum_{j=1}^p|S_{a_j}|}.
\end{align*}
\end{corollary}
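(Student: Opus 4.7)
The plan is to argue by induction on the total number of degree-2 vertices $N := \sum_i |S_i|$ of $\Gamma$. The base case $N = 0$ is immediate, since then $\Gamma = \hat{\Gamma}$ and every $p \geq 1$ summand on the right-hand side vanishes, leaving only the $H^k(T^n, R^if_*\underline{\QQ}_{\hat{\Gamma}})$ term, which is manifestly equal to the left-hand side.

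For the inductive step, suppose $N \geq 1$, pick a degree-2 vertex $v \in S_i$, and let $e$ be one of the two edges incident to $v$. Because $b_1(\Gamma) = n \geq 2$, Theorem \ref{smalldelcon} gives a splitting
\[ H^k(T^n, R^if_*\underline{\QQ}_\Gamma) \cong H^k(T^n, R^if_*\underline{\QQ}_{\Gamma/e}) \oplus H^{k-1}(T^n, R^{i-1}f_*\underline{\QQ}_{\Gamma \setminus e}). \]
I would then identify each summand as a smaller instance of the same situation. The contraction $\Gamma/e$ retains $\hat{\Gamma}$ as its base graph (contracting $e$ merely shortens the subdivided path corresponding to $e_i$ by one) while reducing $|S_i|$ by one and leaving the other $S_j$ untouched. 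In the deletion $\Gamma \setminus e$, the other edge $e'$ at $v$ becomes a bridge, so by the bridge-invariance of $\frakD$ recorded in the footnote of Section \ref{hths} I may replace $\Gamma \setminus e$ by $(\Gamma \setminus e)/e'$; iterating, each bridge contraction merges the current degree-1 vertex into its neighbour and exposes the next edge of the $e_i$-path as a new bridge, and the cascade terminates only after the entire $e_i$-path has been consumed. The resulting graph $\Gamma'$ has base graph $\hat{\Gamma} \setminus e_i$ and degree-2 vertex sets $\{S_j\}_{j \neq i}$, so the inductive hypothesis applies to it.

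Substituting the hypothesis-provided formulas for $\Gamma/e$ and $\Gamma'$ into the Theorem~\ref{smalldelcon} splitting and reindexing, the contribution to a summand indexed by $\{a_1 < \cdots < a_p\} \subseteq \{1,\dots,m\}$ splits according to whether $i$ belongs to this subset. Subsets not containing $i$ receive their full multiplicity from the $\Gamma/e$ side, where the prescribed combinatorial factor is unaffected by the drop $|S_i| \mapsto |S_i|-1$. Subsets containing $i$ receive a contribution from $\Gamma/e$ (carrying the reduced factor coming from $|S_i|-1$) and a contribution from $\Gamma'$ (indexed by the complementary subset, with $i$ absent from $\{S_j\}_{j\neq i}$ but effectively supplying a unit factor in place of $|S_i|$). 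A direct arithmetic check confirms that these two contributions combine into precisely the multiplicity asserted by the formula for $\Gamma$, completing the induction.

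The main obstacle is the combinatorial-geometric identification of $\Gamma \setminus e$ (after all bridge contractions) with a graph having base $\hat{\Gamma} \setminus e_i$ and the announced $S_j$-structure; this I would establish by an inner induction on the length of the $e_i$-path. A secondary concern is that intermediate graphs $\Gamma/e$ or $\Gamma'$ may acquire disconnecting vertices even when $\Gamma$ has none, so I would either strengthen the inductive hypothesis to permit such graphs or invoke Corollary \ref{disconcorr} to reduce to blocks before applying the hypothesis termwise; once this is addressed, the bookkeeping above gives the claim.
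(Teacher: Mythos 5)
Your overall strategy — iterating Theorem \ref{smalldelcon} and reorganizing the resulting terms — matches the paper's, but the inductive step contains a genuine gap: the claim that $\Gamma'$ (the bridge-contracted deletion) ``has base graph $\hat\Gamma\setminus e_i$ and degree-2 vertex sets $\{S_j\}_{j\neq i}$'' is false whenever $e_i$ is incident in $\hat\Gamma$ to a vertex of degree $3$, since that vertex then has degree $2$ in $\hat\Gamma\setminus e_i$. In that case $\hat\Gamma\setminus e_i$ is not itself a base graph, and $\Gamma'$ acquires ``new'' degree-2 vertices lying in none of your $S_j$. Concretely, with $\hat\Gamma = K_4$ and $\Gamma$ the graph obtained by subdividing the edge $e_1=ab$ once by a vertex $v$, one has $N(\Gamma)=1$; but deleting $e=av$ and contracting the bridge $vb$ gives $\Gamma' = K_4\setminus ab$, which has two degree-2 vertices ($a$ and $b$) and whose base graph is $\Theta_3$, not $K_4\setminus ab$. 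So $N(\Gamma')=2>N(\Gamma)$: your induction parameter goes up, not down, and the inductive hypothesis does not apply to $\Gamma'$. Even in cases where $N$ happens to decrease, the hypothesis applied to $\Gamma'$ produces an expansion in terms of $\hat{\Gamma'}$, which is not $\hat\Gamma\setminus e_i$, so additional bookkeeping is still required to reach the asserted form.

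The paper sidesteps this by never invoking a full ``base-graph'' formula for $\Gamma'$. It applies Theorem \ref{smalldelcon} only to degree-2 vertices coming from the original $S_j$'s, accepting that the residual graphs $\hat\Gamma\setminus e_{a_1},\dots,e_{a_p}$ on the right-hand side may themselves contain degree-2 vertices — which causes no harm, because the corollary's formula is stated directly in those terms with no requirement that they be degree-2-free. To repair your argument, either mimic this (induct on a quantity that strictly decreases under both moves, such as $|E(\Gamma)|$ or the lexicographic pair $(b_1(\Gamma),N)$, and stop the unwinding once you reach $\hat\Gamma$-minus-edges rather than pushing each summand to its own base graph), or else formulate and prove a more flexible version of the corollary in which the ``base'' may be any graph, so that the $\Gamma'$ term can be expanded relative to $\hat\Gamma\setminus e_i$ itself.
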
 

\begin{proof}
This statement follows from repeated applications of Theorem \ref{smalldelcon} and reorganization of the terms. 

Let us see some of the details. We write $S_i = \{e^i_1,\dots,e^i_{|S_{a_i}|}\}$ and suppress ``$T^n$" in each term. By repeatedly applying Theorem \ref{smalldelcon} to the $k$-th degree cohomology (and noting that $H^k(R^if_*\underline{\QQ}_{\Gamma \setminus e'})=H^k(R^if_*\underline{\QQ}_{\Gamma \setminus e''})$ for two degree $2$ edges in the same $S_i$), we find
\begin{align*}
H^k(R^if_*\underline{\QQ}_\Gamma) &= H^k(R^if_*\underline{\QQ}_{\Gamma / e^1_1}) \oplus H^{k-1}(R^{i-1}f_*\underline{\QQ}_{\Gamma\setminus e^1_1})\\
&= H^k(R^if_*\underline{\QQ}_{\Gamma / e^1_1, e^1_2}) \oplus H^{k-1}(R^{i-1}f_*\underline{\QQ}_{\Gamma\setminus e^1_1, e^1_2})^{\oplus 2}\\
&\vdots \\
&= H^k(R^if_*\underline{\QQ}_{\Gamma / S_1}) \oplus H^{k-1}(R^{i-1}f_*\underline{\QQ}_{\Gamma\setminus S_1})^{\oplus {|S_{1}|}}\\
&= H^k(R^if_*\underline{\QQ}_{(\Gamma / S_1)/e^2_1}) \oplus H^{k-1}(R^{i-1}f_*\underline{\QQ}_{(\Gamma / S_1)\setminus e^2_1})\\
&\qquad\oplus H^{k-1}(R^{i-1}f_*\underline{\QQ}_{\Gamma\setminus S_1})^{\oplus {|S_{a_1}|}}\\
&\vdots \\
&= H^k(R^if_*\underline{\QQ}_{\hat\Gamma}) \oplus \bigoplus_{a_1=1}^m H^{k-1}(R^{i-1}f_*\underline{\QQ}_{(\Gamma / S_1,\dots,S_{a_1-1})\setminus S_{a_1}})^{\oplus {|S_{a_1}|}}
\end{align*}

Repeating this process on the degree $k-1$ cohomology terms (and so on) yields the following, which only a mother could love:
\begin{align*}
H^k(R^if_*\underline{\QQ}_\Gamma) &= H^k(R^if_*\underline{\QQ}_{\hat\Gamma}) \oplus 
\bigoplus_{a_1=1}^m \Bigg(H^{k-1}(R^{i-1}f_*\underline{\QQ}_{\hat\Gamma\setminus e_{a_1}})\\
&\qquad \oplus \bigoplus_{a_2=a_1+1}^m \Bigg( H^{k-2}(R^{i-2}f_*\underline{\QQ}_{\hat\Gamma\setminus e_{a_1},e_{a_2}})\\
&\qquad \oplus \bigoplus_{a_3=a_2+1}^m \Bigg( H^{k-3}(R^{i-3}f_*\underline{\QQ}_{\hat\Gamma\setminus e_{a_1},e_{a_2},e_{a_3}})\\
&\qquad \oplus \bigoplus_{a_4=a_3+1}^m \Bigg( \dots \Bigg)^{\oplus |S_{a_3}|} \Bigg)^{\oplus |S_{a_2}|} \Bigg)^{\oplus {|S_{a_1}|}}
\end{align*}

To rearrange this into something manageable, observe that collecting the $k-p$ degree cohomology groups above yields exactly $$\bigoplus_{1\leq a_1<\dots<a_p\leq m} H^{k-p}(T^n,R^{i-p}f_*\underline{\QQ}_{\hat{\Gamma}\setminus e_1,\dots,e_p})^{\oplus\sum_{j=1}^p|S_{a_j}|}.$$
\end{proof}

The next couple of results cover the low-dimensional examples.

\begin{proposition}\label{dim1}
If $\Gamma$ is a graph with $b_1(\Gamma) = 1$, then the Poincar\'e polynomial of $\mathfrak{D}(\Gamma)$ is 
$$1+y+|E(\Gamma)|y^2.$$
\end{proposition}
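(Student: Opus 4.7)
The plan is to reduce to a very concrete case and then apply the cohomological decomposition plus Lemma \ref{gribby}. First, since bridges can be safely ignored (the footnote on bridges gives $\frakD(\Gamma)=\frakD(\Gamma/e)$ for bridges $e$), I may assume $\Gamma$ has no bridges. A connected bridgeless graph with $b_1(\Gamma)=1$ is either the single loop $\loopquiver$ or the cycle graph on $n=|E(\Gamma)|$ edges. In either case the periodic hyperplane arrangement lives in $H_1(\Gamma,\RR)/H_1(\Gamma,\ZZ)\cong S^1$ and consists of $n$ shifted points, so the polytopal complex $P_\bullet$ of $h_\Gamma^{-1}(0)$ is exactly $S^1$ with $n$ vertices and $n$ open edges.

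Next I would use the deformation retract $h_\Gamma^{-1}(0)\hookrightarrow \frakD(\Gamma)$ (Subsection \ref{props}) together with the cohomological decomposition stated as Theorem 3.4 of \cite{Sun25}:
$$H^k(\frakD(\Gamma),\uQQ_{\frakD(\Gamma)}) \;\cong\; H^k(h_\Gamma^{-1}(0),\uQQ_\Gamma) \;\cong\; \bigoplus_{p+q=k}H^q\bigl(P_\bullet,R^pf_*\uQQ_\Gamma\bigr).$$
Since $h_\Gamma^{-1}(0)$ is $1$-complex-dimensional, only $p=0,1$ contribute. By Corollary \ref{fibre_cohom}, $(R^0f_*\uQQ_\Gamma)_x\cong\QQ$ for every $x$, and a quick cellular-sheaf check (the restriction maps in the associated cellular sheaf are identities, because they come from the natural $H^0$ maps induced by fibre inclusions of a point into a connected circle) identifies $R^0f_*\uQQ_\Gamma$ with the constant sheaf $\uQQ_{S^1}$. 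Similarly $(R^1f_*\uQQ_\Gamma)_x\cong H_\alpha$ is one-dimensional over points of open $1$-cells and zero over $0$-cells, so $R^1f_*\uQQ_\Gamma$ is supported on $\mathring{\text{Sk}}_1(P_\bullet)$.

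Finally I would assemble the three surviving summands. The constant sheaf on $S^1$ gives $H^0(S^1,\uQQ_{S^1})=H^1(S^1,\uQQ_{S^1})=\QQ$, contributing $1+y$. For the top pushforward, Lemma \ref{gribby} with $n=i=1$ yields
$$H^q\bigl(P_\bullet, R^1f_*\uQQ_\Gamma\bigr) \;=\; H^q\bigl(P_\bullet, R^1f_*\uQQ_\Gamma|_{\mathring{\text{Sk}}_1}\bigr) \;=\; \begin{cases}\QQ^{\,|\mathring{\text{Sk}}_1(P_\bullet)|}=\QQ^n, & q=1,\\ 0, & \text{otherwise},\end{cases}$$
contributing $|E(\Gamma)|\,y^{1+1}=|E(\Gamma)|\,y^2$ in total degree $2$. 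Summing the three contributions gives $1+y+|E(\Gamma)|y^2$, as claimed.

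There is no genuine obstacle here; everything is a direct application of already-established tools. The one step that requires a moment of care is confirming that $R^0f_*\uQQ_\Gamma$ really is the constant sheaf on $S^1$ (and not some nontrivial cell-compatible sheaf with all stalks $\QQ$), but this is immediate from connectedness of the fibres and the cellular-sheaf description recalled in Section \ref{section:cellular}.
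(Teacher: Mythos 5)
Your proof is correct. It reaches the same intermediate geometric picture as the paper — a bridgeless graph with $b_1=1$ is a single cycle on $|E(\Gamma)|$ edges, so the polytopal complex is $S^1$ with $|E(\Gamma)|$ marked points and $h_\Gamma^{-1}(0)$ is a necklace of $|E(\Gamma)|$ copies of $\PP^1$ — but you then run the sheaf machinery (the cohomological decomposition of Theorem 3.4 of \cite{Sun25}, identification of $R^0f_*\uQQ_\Gamma$ with the constant sheaf, and Lemma \ref{gribby} for $R^1f_*\uQQ_\Gamma$) where the paper simply asserts that the necklace of $\PP^1$'s has Poincar\'e polynomial $1+y+|E(\Gamma)|y^2$ and stops. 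Your route is longer but has the virtue of being a dry run for the higher-dimensional argument in Theorem \ref{dim2}, which really does require the full sheaf-theoretic apparatus; the paper's route is shorter because in complex dimension one the cohomology of a cyclic chain of $\PP^1$'s is an elementary Mayer--Vietoris exercise. One small note of caution in your opening reduction: contracting a bridge changes $|E(\Gamma)|$, so the reduction ``I may assume $\Gamma$ has no bridges'' would alter the claimed answer if bridges were genuinely present — it works only because the paper's standing convention (stated in Section \ref{hths}) already requires $\Gamma$ to be bridgeless, as the paper's own proof also flags with the parenthetical ``(and no bridges)''.
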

\begin{proof}
A graph with $b_1(\Gamma) = 1$ (and no bridges) is just a cycle of $|E(\Gamma)|$ edges and so $h_\Gamma^{-1}(0)$ is a necklace of $|E(\Gamma)|$ copies of $\PP^1$, which has the stated Poincar\'e polynomial.
\end{proof}

\begin{theorem}\label{dim2}
Let $\Gamma$ be a graph with $b_1(\Gamma) = 2$ and $t(\Gamma)$ be the number of spanning trees of $\Gamma$. Then
\begin{enumerate}
\item If $\Gamma$ has a disconnecting vertex, then the Poincar\'e polynomial of $\mathfrak{D}(\Gamma)$ is 
$$1+2y+(|E(\Gamma)|+1)y^2+|E(\Gamma)|y^3+t(\Gamma)y^4.$$
\item If $\Gamma$ does not have a disconnecting vertex, then the Poincar\'e polynomial of $\mathfrak{D}(\Gamma)$ is 
$$1+2y+|E(\Gamma)|y^2+(|E(\Gamma)|-1)y^3+t(\Gamma)y^4.$$
\end{enumerate}

\end{theorem}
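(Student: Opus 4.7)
The plan is to split the argument along the two cases of the theorem, according to whether $\Gamma$ has a disconnecting vertex. For Case $(1)$, the no-bridges convention combined with $b_1(\Gamma)=2$ forces the disconnecting vertex $v$ to split $\Gamma$ as $\Gamma_1\cup\Gamma_2$ with $\Gamma_1\cap\Gamma_2=\{v\}$, where each $\Gamma_i$ has $b_1(\Gamma_i)=1$ and hence is a simple cycle (a graph of first Betti number $1$ with no bridges must be a cycle). I would then apply Corollary \ref{disconcorr} to factor the Poincar\'e polynomial as $P_{\frakD(\Gamma)}(y)=P_{\frakD(\Gamma_1)}(y)\cdot P_{\frakD(\Gamma_2)}(y)$, use Proposition \ref{dim1} to write each factor as $1+y+|E(\Gamma_i)|y^2$, and expand. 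The identities $|E(\Gamma)|=|E(\Gamma_1)|+|E(\Gamma_2)|$ and $t(\Gamma)=|E(\Gamma_1)|\cdot|E(\Gamma_2)|$ (each cycle of length $n$ has $n$ spanning trees, and a spanning tree of $\Gamma$ is a pair of spanning trees of the $\Gamma_i$) then produce exactly the polynomial in part $(1)$.

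For Case $(2)$, I would first identify $\Theta_3$ as the unique base graph: the constraints of no bridges, no disconnecting vertex, no degree-$2$ vertices, and $b_1=2$ together with $3|V|\leq 2|E|=2|V|+2$ force $|V|\leq 2$, and the $|V|=1$ candidate (bouquet of two loops) is ruled out since its unique vertex is a disconnecting vertex. The base case $P_{\frakD(\Theta_3)}(y)=1+2y+3y^2+2y^3+3y^4$ I would compute via the decomposition $H^k(\frakD(\Theta_3))\cong\bigoplus_{i+q=k}H^q(T^2,R^if_*\underline{\QQ})$ of Theorem 3.4 of \cite{Sun25}. The $R^0f_*\underline{\QQ}=\underline{\QQ}_{T^2}$ piece contributes $1+2y+y^2$; the $R^1f_*\underline{\QQ}$ piece contributes $2y^2+2y^3$ after invoking the cellular sheaf computation carried out in Example \ref{3edgecellularcomp}; and the $R^2f_*\underline{\QQ}$ piece contributes $3y^4$ by Lemma \ref{gribby} together with $t(\Theta_3)=3$. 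Summing gives the required polynomial specialized to $\Gamma=\Theta_3$.

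To pass from $\Theta_3$ to a general $\Gamma$ of Case $(2)$, I would iterate Theorem \ref{smalldelcon}, which at the level of Poincar\'e polynomials becomes the recursion $P_{\frakD(\Gamma)}(y)=P_{\frakD(\Gamma/e)}(y)+y^2P_{\frakD(\Gamma\setminus e)}(y)$ whenever $e$ is incident to a degree-$2$ vertex. Parameterize $\Gamma$ by the triple $(s_1,s_2,s_3)$ of subdivision counts on the three $\Theta_3$-paths. Each recursive contraction decrements one $s_i$, while each deletion collapses the graph to a cycle whose Poincar\'e polynomial is $1+y+(\text{length})y^2$ by Proposition \ref{dim1}. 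Telescoping the recursion over all $s_1+s_2+s_3$ contractions produces a total Poincar\'e polynomial whose $y^4$ coefficient simplifies to $3+2s+s_1s_2+s_1s_3+s_2s_3$, where $s=s_1+s_2+s_3$. I would then verify directly by enumerating spanning trees that $t(\Gamma)=(s_1+1)(s_2+1)+(s_1+1)(s_3+1)+(s_2+1)(s_3+1)$ (each spanning tree must contain one of the three paths in full and omit exactly one edge from each of the other two), and this agrees with the $y^4$ coefficient; the $y^2$ and $y^3$ coefficients $s+3$ and $s+2$ match $|E(\Gamma)|$ and $|E(\Gamma)|-1$ automatically.

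The main obstacle is the base-case calculation for $\Theta_3$, specifically the explicit determination of $H^q(T^2,R^1f_*\underline{\QQ}_{\Theta_3})$ from the cellular cochain complex spelled out in Example \ref{3edgecellularcomp}; once that rank computation is in hand, everything else is combinatorial bookkeeping assembled from tools already developed earlier in the paper.
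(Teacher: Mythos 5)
Your proposal is correct. Part (1) is essentially identical to the paper's: both invoke the no-bridges convention to force $b_1(\Gamma_1)=b_1(\Gamma_2)=1$, apply Corollary \ref{disconcorr} to factor the Poincar\'e polynomial, substitute Proposition \ref{dim1}, and use $t(\Gamma)=|E(\Gamma_1)||E(\Gamma_2)|$.

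For part (2) you take a genuinely different route. Both start from $\Theta_3$ with the same cellular computation of $H^q(T^2,R^if_*\uQQ_{\Theta_3})$, and both rely on Theorem \ref{smalldelcon}. But the paper runs the induction at the level of an individual sheaf: it shows $H^k(T^2,R^1f_*\uQQ_\Gamma)\cong\QQ^{|E(\Gamma)|-1}$ for $k=1,2$, and the only contribution it needs from $\Gamma\setminus e$ is $H^0(T^1,R^0f_*\uQQ_{\Gamma\setminus e})\cong\QQ$, which holds for any graph; the $R^2$ piece is then handled once and for all by Lemmas \ref{gribby} and \ref{tree}. You instead sum the decomposition first to get the recursion $P_{\frakD(\Gamma)}(y)=P_{\frakD(\Gamma/e)}(y)+y^2P_{\frakD(\Gamma\setminus e)}(y)$, telescope it, and compare coefficients. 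This is valid (the degree bookkeeping $i+q=m\mapsto (i-1)+(q-1)=m-2$ goes through), and the explicit spanning-tree count $t(\Gamma)=\sum_{i<j}\ell_i\ell_j$ for subdivided theta graphs checks out. The trade-off is that your method requires the full Poincar\'e polynomial of $\Gamma\setminus e$ at every step, hence the explicit classification of all $b_1=2$, no-disconnecting-vertex graphs as subdivided $\Theta_3$'s and the parameterization by $(s_1,s_2,s_3)$; the paper's sheaf-level induction avoids that classification entirely because $H^0(R^0)$ of the deletion is trivially rank one regardless of the combinatorics. One minor point worth spelling out if you write this up: $\Gamma\setminus e$ is literally a graph with two dangling bridge-paths attached to a cycle of length $\ell_j+\ell_k$, and it is only after invoking the paper's convention $\frakD(\Gamma')=\frakD(\Gamma'/e')$ for a bridge $e'$ that you may identify its hypertoric Hitchin system with that of the cycle and apply Proposition \ref{dim1}.
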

\begin{proof}
For part (1), note that Corollary \ref{disconcorr} implies that the Poincar\'e polynomial of $\frakD(\Gamma)$ is $\calP_y(\frakD(\Gamma)) = \calP_y(\frakD(\Gamma_1))\calP_y(\frakD(\Gamma_2))$
and Proposition \ref{dim1} simplifies this to  
\begin{align*}
\calP_y(\frakD(\Gamma))&=\left(1+y+|E(\Gamma_1)|y^2\right)\left(1+y+|E(\Gamma_2)|y^2\right)\\
&=1+2y+\left(|E(\Gamma_1)|+|E(\Gamma_2)|+1\right)y^2\\
&\qquad+\left(|E(\Gamma_1)|+|E(\Gamma_2)|\right)y^3+|E(\Gamma_1)||E(\Gamma_2)|y^4\\
&=1+2y+(|E(\Gamma)|+1)y^2+|E(\Gamma)|y^3+t(\Gamma)y^4
\end{align*}

Part (2) requires a bit more work. The idea is to first find the cohomology groups of $\frakD(\Theta_3)$ and then apply Theorem \ref{smalldelcon}, since any graph with first Betti number $2$ and no disconnecting vertices can be reduced to $\Theta = \triquiver$ via contractions of edges which are incident to vertices of degree $2$.

To calculate $H^\bullet(\frakD(\Theta_3),\underline{\QQ}_{\frakD(\Theta_3)} )$, we recall that thanks to Equation \eqref{cohomdecomp} it suffices to find the cohomology groups of the cell-compatible sheaves $R^if_*\underline{\QQ}_{\Theta_3}$ on $P_\bullet(h^{-1}_{\Theta_3}(0))$. In this case, $P_\bullet(h^{-1}_{\Theta_3}(0))$ has the topological type of a $2$-torus. So we find

\begin{equation*}
H^k(T^2,R^0f_*\underline{\QQ}_{\Theta_3}) = \begin{cases}
\QQ^{1} , &k=0,2\\
\QQ^{ 2} , &k=1\\
0, &\text{otherwise}
\end{cases}
\end{equation*}
trivially, and 
\begin{equation*}
H^k(T^2,R^2f_*\underline{\QQ}_{\Theta_3}) = \begin{cases}
\QQ^{ 3} , &k=2\\
0, &\text{otherwise}
\end{cases}
\end{equation*}
by Lemma \ref{gribby}.
Lastly, the calculation of the cohomology groups of $R^1f_*\underline{\QQ}_{\Theta_3}$ was conveniently placed earlier in this article as Example \ref{3edgecellularcomp}, where it was found that 
\begin{equation}\label{R1statement}
H^k(T^2,R^1f_*\underline{\QQ}_{\Theta_3}) = \begin{cases}
\QQ^{\oplus 2}, &k=1,2\\
0, &\text{otherwise}
\end{cases}
\end{equation}
Now let us prove by induction that
\begin{equation*}
H^k(T^2,R^1f_*\underline{\QQ}_\Gamma) = \begin{cases}
\QQ^{|E(\Gamma)|-1}, &k=1,2\\
0, &\text{otherwise}
\end{cases}
\end{equation*}
for $\Gamma$ a graph with $b_1(\Gamma)=2$ and no disconnecting vertices. Equation \eqref{R1statement} serves as our base case.  Assuming that the statement holds for a graph with $k$ edges, let $\Gamma$ be a graph with $|E(\Gamma)|=k+1$ and $e$ an edge of $\Gamma$ which is adjacent to a vertex of degree $2$. Then Theorem \ref{smalldelcon} yields
\begin{equation*}
\begin{split}
H^1(T^2,R^1f_*\underline{\QQ}_\Gamma)&\cong H^1(T^2,R^1f_*\underline{\QQ}_{\Gamma/e})\oplus H^0(T^2,R^0f_*\underline{\QQ}_{\Gamma\setminus e})\\
&\cong \QQ^{|E(\Gamma/e)|-1}\oplus \QQ \\
&\cong \QQ^{|E(\Gamma)|-2}\oplus \QQ \\
&\cong \QQ^{|E(\Gamma)|-1} 
\end{split}
\end{equation*}
and similarly for $H^2(T^2,R^1f_*\underline{\QQ}_\Gamma)$, so we are done, in view of the fact that Lemma \ref{gribby} again yields 
\begin{equation*}
H^k(T^2,R^2f_*\underline{\QQ}_\Gamma) = \begin{cases}
\QQ^{|\mathring{\text{Sk}}_2|} , &k=2\\
0, &\text{otherwise}
\end{cases}
\end{equation*}
and $|\mathring{\text{Sk}}_2|=t(\Gamma)$ by Lemma \ref{tree}.
\end{proof}

We leave as an exercise the analogous formulas for higher dimensions, and would be especially happy to see such a formula for graphs with $b_1(\Gamma)=3$ (hint: there are four graphs with $b_1(\Gamma)=3$, no disconnecting vertices, and no vertices of degree $2$ to which one should apply the methods of Section \ref{section:cellular}).

\bibliographystyle{acm} 
\bibliography{hths}

\end{document}